\newtheorem{Theorem}{Theorem}[section]
\newtheorem{theorem}[Theorem]{Theorem}
\newtheorem{corollary}{Corollary}[section]
\newtheorem{proposition}{Proposition}[section]
\newtheorem{lemma}{Lemma}[section]
\newtheorem{remark}{Remark}[section]
\newcommand{\R}{{\mathbb R}}
\newcommand{\eps}{\varepsilon}
\numberwithin{equation}{section}
\begin{document}

\title[A priori estimates]{A priori estimates of solutions to local \\
and nonlocal superlinear parabolic problems}
\author[Pavol Quittner]{Pavol Quittner}

\thanks{Author's address: Department of Applied Mathematics and Statistics, Comenius University
Mlynsk\'a dolina, 84248 Bratislava, Slovakia. Email: quittner@fmph.uniba.sk}

\date{}

\begin{abstract}
We consider a priori estimates of possibly sign-changing solutions
to superlinear parabolic problems and their applications
(blow-up rates, energy blow-up, continuity of blow-up time, existence of nontrivial steady states etc).
Our estimates are based mainly on energy, interpolation and bootstrap
arguments, but we also use the Pohozaev identity, for example.
We first discuss some known results on local problems 
and then consider problems with nonlocal nonlinearities
or nonlocal differential operators.
In particular, we deal with the fractional Laplacian and nonlinearities of Choquard type.
 
\vskip 0.2cm
{\bf AMS Classification:} 35K58; 35K61; 35B40; 35B45; 35B44; 35J66. 
\vskip 0.1cm

{\bf Keywords:} Nonlocal parabolic problem, a priori estimates, blow-up,
Choquard nonlinearity, fractional Laplacian, Schr\"odinger-Poisson-Slater system
\end{abstract}

\maketitle

\section{Introduction and main results}
\label{intro}

In this paper we consider parabolic problems of the type
\begin{equation} \label{AF}
\begin{aligned}
u_t+{\mathcal A}u &={\mathcal F}(u) &\quad&\hbox{in }\ \Omega\times(0,T), \\
u &=0 &\quad&\hbox{in }(\R^n\setminus\Omega)\times(0,T), \\
u(\cdot,0) &=u_0,
\end{aligned}
\end{equation}
where 
$u:\R^n\times[0,T)\to\R^N$, $\Omega\subset\R^n$,
$\mathcal A$ is a possibly nonlocal differential operator 
and $\mathcal F$ is a possibly nonlocal superlinear potential operator.
Assume that problem \eqref{AF} is locally well-posed in a Banach space
$X$ and  $u=u(x,t)=u(x,t;u_0)$ is a solution of \eqref{AF} with
$u_0\in X$ and the maximal existence time $T=T_{max}(u_0)\leq\infty$.
Given $\delta>0$ and $c_0>0$,
we are interested in the a priori estimate 
\begin{equation} \label{AE}
  \|u(\cdot,t;u_0)\|_X \leq C(\delta,c_0) \quad
  \hbox{whenever }\ t\in[0,T_{max}(u_0)-\delta)\ \hbox{ and }\ \|u_0\|_X\leq c_0,
\end{equation}
where $T_{max}(u_0)-\delta:=\infty$ if $T_{max}(u_0)=\infty$.
If one considers only global solutions (i.e. $T_{max}=\infty$), then \eqref{AE}
can be written in the simpler form
\begin{equation} \label{AE2}
\|u(\cdot,t;u_0)\|_X \leq C(c_0) \quad
  \hbox{whenever }\ t\in[0,\infty)\ \hbox{ and }\ \|u_0\|_X\leq c_0.
\end{equation}
Notice that if a global solution $u$ with initial data $u_0$ is bounded in $X$,
i.e.~$C(u_0):=\sup_{t\ge0}\|u(\cdot,t;u_0)\|_X<\infty$, then
$u$ satisfies the estimate
\begin{equation} \label{AEG}
 \|u(\cdot,t;u_0)\|_X \leq C(u_0), \quad t\in[0,\infty), 
\end{equation}
which is weaker than \eqref{AE2} 
and does not imply the consequences of estimates \eqref{AE} or \eqref{AE2}
that we are interested in, see Remark~\ref{rempS}.

We will first comment on known results for local problems
(Subsections~\ref{subsec1} and~\ref{subsec2})
and then consider problems with nonlocal nonlinearities
or nonlocal differential operators (Subsection~\ref{subsec3}).
Assuming that $\Omega\subset\R^n$ is bounded and smooth, our results 
guarantee estimate \eqref{AE} and its consequences
for problems with
$$ {\mathcal A}u=-\Delta u+\mu u\quad\hbox{and}\quad {\mathcal F}(u)=
\mu|u|^{q-1}u+
\lambda\Bigl(\int_\Omega\frac{|u(y)|^p}{|x-y|^{n-2}}\,dy\Bigr)|u|^{p-2}u,
$$
where either $\mu=1$, $\lambda<0$, $n=3$, $p=2$ and $q\in(2,5)$, 
or $\mu=0$, $\lambda=1$, $3\le n\le5$ and $p\in[2,p^*)$
(see  Theorem~\ref{thm23}, Corollary~\ref{cor23} and Theorem~\ref{thmChoquard}
for more precise statements),
and Theorem~\ref{thmFracLap} guarantees estimate \eqref{AE} provided
$$ {\mathcal A}u=(-\Delta)^\alpha u\quad\hbox{and}\quad {\mathcal F}(u)=|u|^{p-1}u,$$
where $\alpha\in(0,1)$ and $p>1$ is subcritical in the Sobolev sense.

\subsection{Early results and applications to blow-up rates and decay} \label{subsec1}

Let us first consider global solutions of the model scalar problem 
\begin{equation} \label{MP}
\begin{aligned}
u_t-\Delta u &=|u|^{p-1}u &\quad&\hbox{in }\ \Omega\times(0,\infty), \\
u &=0 &\quad&\hbox{on }\partial\Omega\times(0,\infty), \\
u(\cdot,0) &=u_0,
\end{aligned}
\end{equation}
where $\Omega\subset\R^n$ is bounded and $p>1$.
In 1984, Cazenave and Lions \cite{CL84} used energy and interpolation arguments
to show estimate \eqref{AE2} provided $p<p_{CL}$, where
$p_{CL}=\infty$ if $n=1$ and $p_{CL}=(3n+8)/(3n-4)$ otherwise,
and they also proved the weaker estimate \eqref{AEG} provided $p<p_S$,
where $p_S$ is the critical Sobolev exponent
$$ p_S=\begin{cases} \infty &\hbox{ if }n\leq2, \\
                    \frac{n+2}{n-2} &\hbox{ if }n>2 \end{cases}$$
(notice that $p_{CL}<p_S$ if $n>1$).
In fact, their results apply to 
more general local nonlinearities ${\mathcal F}(u)=f(u)$, where
$f\in C^1(\R,\R)$
satisfies the growth assumption 
\begin{equation}  \label{assCL1}
|f(u)|\le C(|u|+|u|^p)
\end{equation}
and the superlinearity condition 
\begin{equation} \label{assCL2}
f(u)u\ge(2+\eps)F(u), \ \hbox{ where }\ \eps>0 \hbox{ and }\ 
F(u)=\int_0^uf(s)\,ds.
\end{equation}
Estimate \eqref{AE2} in \cite{CL84} significantly improved a slightly earlier result
by Ni, Sacks and Tavantzis
\cite{NST84} for nonnegative solutions of \eqref{MP} with $\Omega$ convex and $p<p_F:=(n+2)/n$. 
The results in \cite{NST84} also imply that estimate \eqref{AE2} fails if
$\Omega$ is convex and $p\geq p_S$.

In 1986, Giga \cite{G86} used rescaling arguments and the elliptic
Liouville theorem in \cite{GS81} in order to prove \eqref{AE2} 
for nonnegative global solutions of \eqref{MP} under
the optimal growth condition $p<p_S$.
The validity of \eqref{AE2} for sign-changing global solutions of \eqref{MP} 
in this full subcritical range remained open.

Assume that a solution $u$ of the model equation
\begin{equation} \label{eqGK}
 u_t-\Delta u=|u|^{p-1}u \quad \hbox{ in }\R^n\times(0,T) 
\end{equation}
with initial data $u_0\in L_\infty(\R^n)$
blows up in finite time $T$. 
In 1987, Giga and Kohn \cite{GK87} used the backward similarity variables 
$(y,s)=((x-a)/\sqrt{T-t},-\log(T-t))$
with $a\in\R^n$
in order to transform the solution $u$ into a global solution 
$w(y,s)=(T-t)^{1/(p-1)}u(x,t)$ of  
the rescaled equation 
\begin{equation} \label{eq-w1}
 w_s =\Delta w-\frac12 y\cdot\nabla w-\frac1{p-1} w+|w|^{p-1}w 
  \quad\hbox{in } \R^n\times(-\log T,\infty),
\end{equation}
and then, using the arguments in
\cite{CL84} and \cite{G86}, they proved uniform bounds 
for solutions $w$ provided
\begin{equation} \label{rangeGK}
\hbox{either }\ p<p_{CL} \ \hbox{ or }\ (u\geq0 \hbox{ and }p<p_S).
\end{equation}
Assuming \eqref{rangeGK},
the bounds on $w$ imply the optimal blow-up rate estimate
\begin{equation} \label{rateGK}
\|u(\cdot,t)\|_\infty\leq C(T-t)^{-1/(p-1)}
\end{equation}
with $C=C(u)$.
 
Assume now that $u$ is a solution of \eqref{eqGK} with $T=\infty$
and initial data $u_0$ belonging to the weighted Sobolev space $X:=\{u: u,\nabla u\in L_2(\R^n,e^{|x|^2/4}dx)\}$.  
In 1987, Kavian \cite{K87} used the forward similarity variables 
$(y,s)=(x/\sqrt{t+1},\log(t+1))$
to transform the solution $u$ into the solution 
$w(y,s)=(t+1)^{1/(p-1)}u(x,t)$ of the rescaled equation  
$$ w_s =\Delta w+\frac12 y\cdot\nabla w+\frac1{p-1}w+|w|^{p-1}w 
  \quad\hbox{in } \R^n\times(0,\infty), $$
and then used the arguments in \cite{CL84} 
to show the boundedness of $w$ provided $p_F<p<p_S$.
Consequently, he obtained the decay estimate
\begin{equation} \label{decayFwd}
\|u(\cdot,t)\|_\infty\leq Ct^{-1/(p-1)} \quad\hbox{for }\ t\geq1,
\end{equation}
where $C=C(u_0)$.

Due to the importance of estimate \eqref{AE2},
assumption $p<p_{CL}$ appeared --- in addition to \cite{GK87} ---
in several other papers, see
\cite{E86}, \cite{FZ00} or \cite{MZ00}, for example.
The fact that the condition $p<p_{CL}$ is just technical (at least
in the case of \eqref{MP} with $\Omega$ bounded) was shown by the author \cite{Q99} in 1999:
He used a bootstrap argument to show that estimate \eqref{AE2} for global
solutions of \eqref{MP} is true for all $p\in(1,p_S)$. 
In 2004, by using this bootstrap argument and solving nontrivial technical issues, 
Giga, Matsui and Sasayama \cite{GMS04}  proved the corresponding estimate
also for the rescaled equation \eqref{eq-w1}, 
hence established the blow-up rate estimate \eqref{rateGK}
for sign-changing solutions of \eqref{eqGK} in the full subcritical range $p\in(1,p_S)$. 
By solving still more technical problems, 
the analogue of their result was later proved also for nonlinearities
of the form $f(u)=|u|^{p-1}u+h(u)$ or $f(u)=|u|^{p-1}u\log^a(2+u^2)$
(see \cite{N15} and \cite{HZ22}).
In fact, although the bootstrap argument in \cite{Q99} can be applied
to rather strong perturbations of the power nonlinearity $|u|^{p-1}u$
(see below), even its weak perturbation produces a nonautonomous
nonlinearity in the rescaled equation \eqref{eq-w1} and this causes serious problems
in finding a suitable Lyapunov type functional. 
Arguments in \cite{GMS04} have also been used for parabolic systems
with nonlinearities of the form
$f(u)=\nabla(\sum\beta_{ij}|u_i|^\frac{p+1}2|u_j|^\frac{p+1}2)$ with
$\beta_{ij}\geq0$
\cite{Z21}, and for problems in convex domains \cite{GMS04a,Z22}.  
The bootstrap argument in \cite{Q99} 
also allows one to prove the decay estimate \eqref{decayFwd}
with $C=C(\|u_0\|_X)$, see \cite[Remark 18.5]{QS19}.

\begin{remark} \label{remFwd} \rm {\bf Universal estimates.}
Consider classical solutions $u$ of \eqref{AF} which do not need to satisfy
the initial condition $u(\cdot,0)=u_0$. In some cases one can prove the estimate 
\begin{equation} \label{UEdelta}
  \|u(\cdot,t)\|_\infty \leq C(\delta) \quad
  \hbox{whenever }\ t\in(\delta,T-\delta),
\end{equation}
where $\delta>0$, $T-\delta=\infty$ if $T=\infty$, and
the {\it universal}\/ constant $C=C(\delta)$ can also depend on ${\mathcal A},{\mathcal F}$
and $\Omega$, but is independent of $u$ and $T$.
Such estimate for positive global solutions of \eqref{MP} with $\Omega$
bounded were obtained in \cite{FSW01,Q01,QSW04}, for example.
Later it turned out that by using Liouville-type theorems combined
with scaling and doubling arguments, one can prove such universal estimates
with an explicit (and optimal) dependence of $C(\delta)$ on $\delta$.
Assume, for example, that $p<p_S$ and $u$ is a solution of \eqref{eqGK} satisfying
$$
\hbox{either $u\geq0$ \ or \ $u(\cdot,t)$ is radially symmetric and $z(u(\cdot,t))\le k$,} 
$$
where $k$ is an integer and $z(v)$ denotes the number of sign changes of $v$.
Set also $k=0$ if $u\ge0$.
Then the scaling and doubling arguments in \cite{PQS07}
and the Liouville theorems in \cite{BPQ11,Q21} (cf. also \cite{Q22,QS24}) imply
the existence of a constant $C=C(n,p,k)$ such that
\begin{equation} \label{UE}
 |u(x,t)|\leq C(t^{-1/(p-1)}+(T-t)^{-1/(p-1)})\quad\hbox{for }\ (x,t)\in\R^n\times(0,T), 
\end{equation}
where $(T-t)^{-1/(p-1)}:=0$ if $T=\infty$,
hence the constant $C(\delta)$ in \eqref{UEdelta} can be written in the form
$C(\delta)=2C(n,p,k)\delta^{-1/(p-1)}$.
In particular, estimate \eqref{UE} guarantees both the decay and blow-up rate estimates 
mentioned above.
In addition, unlike the approach in \cite{GK87,GMS04}, these arguments can
also be used to prove blow-up rate estimate \eqref{rateGK} for nonnegative solutions
of the Dirichlet problem in nonconvex domains $\Omega$,
and \eqref{UE} also guarantees initial blow-up rate estimate as
$t\to0$ (see \cite{BV98} for early results of this type 
and cf.~also \cite[Theorems~26.13--26.14]{QS19}).

Notice that in the case of sign-changing solutions, decay estimates in \eqref{UE} or \eqref{decayFwd}
require either radial symmetry and finite number of sign changes or exponential spatial decay
of the solution, respectively. However, some assumptions of this type are also necessary:
If $n=1$, for example, then equation \eqref{eqGK} possesses nontrivial radial bounded 
steady states for any $p>1$.
In the case of the blow-up rate estimate \eqref{rateGK}, if we allow $C=C(u_0)$, then 
it is true for any $u_0\in L_\infty(\R^n)$ due to \cite{GMS04},
but if we wish to have a universal constant or a constant $C=C(\|u_0\|)$, 
then such estimate also requires additional assumptions on $u$.
In fact, if $n=1$ and $p>1$, then there exists a positive solution
$v_1$ of the equation $v_{xx}+v^p=0$ in $(0,1)$ satisfying the boundary
conditions $v(0)=v(1)=0$, and the solution $v_\lambda$ of the corresponding
parabolic problem with initial data $\lambda v_1$ is time-increasing and blows up in finite time
$T_\lambda$ if $\lambda>1$; in addition $T_\lambda\to\infty$ as $\lambda\to1+$.
Let $u_1(x):=v_1(x)$ for $x\in[0,1]$, $u_1(x):=-v_1(-x)$ for $x\in[-1,0)$,
and $u_1(x+2k)=u_1(x)$ for any integer $k$.
If $\lambda>1$, then the solution $u$ of \eqref{eqGK} with initial data $\lambda u_1$ blows up
at $T=T_\lambda$ and $\|u(\cdot,t;\lambda u_1)\|_\infty\geq\|v_1\|_\infty$ for
any $t\in[0,T_\lambda)$,
while \eqref{rateGK} with a universal constant $C$ would imply
$\|u(\cdot,T_\lambda/2;\lambda u_1)\|_\infty\le C(T_\lambda/2)^{-1/(p-1)}\to0$
as $\lambda\to1+$. Notice also that $\|\lambda u_1\|_\infty\leq C$ if $\lambda\in(1,2)$, for example,
hence the blow-up rate estimate \eqref{rateGK} cannot be true with a constant $C=C(\|u_0\|_\infty)$.
% ---- 
Similarly, given $T\in(0,\infty)$ fixed and $\lambda>1$, set
$\alpha=\alpha(\lambda):=\sqrt{T_\lambda/T}$. Then the solution $u$ with
initial data $u_0(x):=\alpha^{2/(p-1)}\lambda u_1(\alpha x)$ blows up at time $T$
and $\|u(\cdot,t)\|_\infty\geq\|u_0\|_\infty\to\infty$ as $\lambda\to1+$,
hence the universal estimate \eqref{UE} fails even if $T$ is fixed. 
% ----
\qed
\end{remark}

\subsection{Generalizations and further applications: Local problems } \label{subsec2}

The bootstrap argument in \cite{Q99} cannot be used 
under the general assumptions of Cazenave and Lions \eqref{assCL1}--\eqref{assCL2}
on the nonlinearity $f$, but it still allows quite strong perturbations and modifications
of the power nonlinearity ${\mathcal F}=|u|^{p-1}u$ (and the operator ${\mathcal A}=-\Delta$),
and the assumptions $\Omega$ bounded and $T_{max}=\infty$ are also not necessary.
In fact, this bootstrap argument was exploited in the author's paper \cite{Q03} 
in order to prove estimate \eqref{AE}
 for general second-order operators ${\mathcal A}$, $T\leq\infty$, $N=1$,
 $\Omega$ bounded or $\Omega=\R^n$, and  ${\mathcal F}(u)=f(\cdot,u)$  with
\begin{equation} \label{growthQ03}
c_1|u|^{p_1}-a_1(x)\leq (f(x,u)+\lambda u)\hbox{sign}(u)\leq c_2|u|^{p_2}+a_2(x),
\end{equation}
where $1<p_1<p_2<p_S$, $p_2-p_1<\kappa$, $\kappa>0$ is an explicit constant depending on $p_2$,
and $\lambda>0$ if $\Omega=\R^n$. 
Estimate \eqref{AE} was then used in \cite{Q03} to prove blow-up of energy and continuity
of the maximal existence time $T_{max}=T_{max}(u_0)$.
In a subsequent paper \cite{Q04}, estimate \eqref{AE} from \cite{Q03} was also used in the proof
of existence of nontrivial steady states or periodic solutions to related problems.
Similar applications in the study of steady states and connecting orbits
can be found in paper \cite{ABKQ08} which deals with equations
of the form $u_t-\Delta u=a(x)|u|^{p-1}u+h(x,u)$, where $a$ may change sign
and $h$ has at most linear growth in $u$.
However, in this case a modification of the bootstrap arguments in \cite{Q99}
was required and estimate \eqref{AE} was proved only if $n>2$ and $p<\hat p$,
where $\hat p\in(p_{CL},p_S)$ is an explicit constant. 
Finally, bootstrap arguments from \cite{Q99} were also used in \cite{AQ05}
in the proof of existence of optimal controls for problems
with final observation governed by superlinear parabolic equations,
and the basic idea from \cite{Q99} was also used in \cite{QS03,CQ04}
to prove \eqref{AE}
in the case of problems with nonlinear boundary conditions of the form
$\frac{\partial u}{\partial\nu}=|u|^{p-1}u$ on $\partial\Omega\times(0,T)$.
 
The results and arguments in \cite{CL84,Q99,Q03} have been used 
in the study of local problems by many other authors.
The topics considered by these authors include 
convergence to equilibria \cite{BJP02},
periodic solutions \cite{H02,HHJY15},
properties of the maximal existence time \cite{CR04,AFPR10}, 
invariant sets \cite{AB05,ABK07},
solutions with high energy initial data \cite{GW05},
saddle-point dynamics \cite{LS05},
multiplicity of equilibria \cite{D07,WW08,DMIP13,LW21},  
global dynamics of blow-up profiles \cite{FM07},
structure of global solutions \cite{CDW09,CDW09a},
Morse theory for indefinite elliptic problems \cite{CJ09},
blow-up with bounded energy in the critical and supercritical cases \cite{D19} etc. 

\subsection{Nonlocal problems} \label{subsec3}

A priori estimates of the form \eqref{AE}--\eqref{AEG} or \eqref{UEdelta}
have also been studied in the case of nonlocal problems.
For example, estimate \eqref{AEG} guaranteeing the boundedness of global
solutions was studied in \cite{F97,FL97,BL97,K04}
and the universal estimate \eqref{UEdelta} in \cite{R03,BCL17}.
Since estimate \eqref{AEG} does not imply the properties
that we are interested in (see Remark~\ref{rempS}) 
and known results on estimate \eqref{UEdelta}
are restricted to nonnegative solutions,
we will discuss only estimates \eqref{AE}--\eqref{AE2}.

Assuming
\begin{equation} \label{assNP1}
{\mathcal A}=-\Delta, \quad  N=1, \quad \Omega \ \hbox{ bounded},
\end{equation}
and using the arguments in \cite{CL84,Q99},
a priori estimates \eqref{AE} have been proved in \cite{Q03} 
for the nonlocal nonlinearities
$${\mathcal F}_1(u)=\frac{|u|^{p-1}u}{(1+\int_\Omega|u|^{p+1})^\alpha}, \quad 0<\alpha<\frac{p-1}{p+1},
 \quad p\in(1,p_S),$$ 
and
$${\mathcal F}_2(u)=|u|^{p-1}u-\frac1{|\Omega|}\int_\Omega|u|^{p-1}u, \quad p\in(1,p_S),$$
where the Dirichlet boundary condition $u=0$ on $\partial\Omega\times(0,T)$ was replaced by the Neummann
boundary condition 
$u_\nu=0$ on $\partial\Omega\times(0,T)$ in the case of ${\mathcal F}_2$.
The assumption $\alpha>0$ in the case of ${\mathcal F}_1$ was removed in \cite{R06}.

Assuming
\begin{equation} \label{assIanni}
{\mathcal Au}=-\Delta u+u, \quad  N=1, \quad \Omega \ \hbox{ bounded}, \quad n=3, \quad p=2,
\end{equation}
and the boundedness of the corresponding energy, 
Ianni \cite{I12} used the arguments in \cite{CL84,Q99} in order to prove estimate \eqref{AE} 
for the nonlinearities
\begin{equation} \label{F3}
{\mathcal F}_3(u)(x)=\Bigl(\int_\Omega\frac{|u(y)|^p}{|x-y|^{n-2}}\,dy\Bigr)|u(x)|^{p-2}u(x),
\end{equation}
$${\mathcal F}_4(u)=|u|^{q-1}u-{\mathcal F}_3(u), \quad q\in[3,p_S),  $$
and
$${\mathcal F}_5(u)=|u|^{q-1}u+{\mathcal F}_3(u), \quad q\in(1,p_S),  $$
which appear in the Choquard, Schr\"odinger-Poisson and Schr\"odinger-Poisson-Slater problems. 
In her subsequent paper \cite{I13}, she used her estimates of solutions of the problem
\begin{equation} \label{pbmSPS}
 \left.
\begin{aligned} u_t-\Delta u+u&=|u|^{q-1}u-
\lambda\Bigl(\int_\Omega\frac{|u(y)|^2}{|x-y|}\,dy\Bigr)u \ &&\hbox{ in }\Omega\times(0,T), \\
                u&= 0 \ &&\hbox{ on }\partial\Omega\times(0,T), \\
                u(\cdot,0)&= u_0 \ &&\hbox{ in }\Omega,
  \end{aligned} \ \right\}
\end{equation}
with $\lambda>0$ and $q\in[3,5)$ in order to prove 
the existence of infinitely many radial sign-changing steady states 
provided $\Omega$ is a ball or the whole of $\R^3$.
If $q\in(2,3)$, then
she did not obtain any priori estimates of solutions of \eqref{pbmSPS}, see \cite[Remark~4.3(ii)]{I12}.
On the other hand,
the existence of infinitely many radial sign-changing steady states of \eqref{pbmSPS}
with $\Omega=\R^3$
has recently been proved in \cite{GW20} by variational and approximation
arguments for any $q\in(2,5)$ (cf.~also \cite{LWZ16} for related results).
Assuming $q\in(2,3)$, we prove the missing a priori estimates of solutions to
\eqref{pbmSPS} with $\Omega$ bounded and starshaped. 
Our estimates are different from \eqref{AE}, but they still
guarantee the existence of infinitely many sign-changing radial steady states if $\Omega$ is a ball, 
and also the existence of positive and sign-changing steady states if $\Omega$ is a general starshaped bounded
domain (cf.\ related results in \cite{RS08}). 
In addition, denoting
$$\Phi_4(u):=  \frac1{q+1}\int_\Omega|u|^{q+1}\,dx-\frac\lambda4\int_\Omega\int_\Omega\frac{|u(x)|^2|u(y)|^2}{|x-y|}\,dy\,dx$$
and the energy
\begin{equation} \label{E-SPS} 
E(u):=\frac12\int_\Omega(|\nabla u|^2+u^2)\,dx-\Phi_4(u),
\end{equation}
our estimates guarantee that the nonincreasing function $t\mapsto E(u(\cdot,t))$
tends to $-\infty$ if the solution of \eqref{pbmSPS} becomes unbounded (in finite or infinite time).    
Finally, if $q\in[3,q_S)$, then we prove estimate \eqref{AE} without 
assuming the boundedness of the energy $E(u(\cdot,t))$.
More precisely, we have the following results.

\begin{theorem} \label{thm23}
Let $q\in(2,5)$, $\lambda>0$, $\Omega\subset\R^3$ be bounded and smooth, and let $u_0\in X$,
where $X=L_R(\Omega)$ and $R\in(\frac32(q-1)\max(1,\frac3q),\infty]$.
If $q\le3$, then assume also that $\Omega$ is starshaped.
Let $u$ be the solution of \eqref{pbmSPS} with the maximal existence time $T_{max}(u_0)$
and let $E$ be defined by \eqref{E-SPS}. 

(i) Assume
\begin{equation} \label{E-pos}
E(u(\cdot,t))\geq0 \ \hbox{ for }\ t\in(0,T_{max}(u_0)).
\end{equation}
Then 
\begin{equation} \label{AEpos}
T_{max}(u_0)=\infty \quad\hbox{and estimate }\ 
\eqref{AE2} \hbox{ is true.}
\end{equation}

(ii) Assume that \eqref{E-pos} fails. Then
\begin{equation} \label{E-neg}
\lim_{t\to T_{max}(u_0)}E(u(\cdot,t))=-\infty \ \hbox{ and }\
\lim_{t\to T_{max}(u_0)}\|u(\cdot,t)\|_X=\infty.
\end{equation}
In addition, if $q\ge3$, then 
\begin{equation} \label{TfiniteAE}
\hbox{$T_{max}(u_0)<\infty$ and estimate \eqref{AE} is true.}
\end{equation}
\end{theorem}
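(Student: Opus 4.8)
The plan is to treat \eqref{pbmSPS} as the gradient flow $u_t=-E'(u)$ of the energy $E$ in \eqref{E-SPS}, so that $\frac{d}{dt}E(u(\cdot,t))=-\|u_t(\cdot,t)\|_2^2\le0$ and, after integration, $\int_\tau^t\|u_s\|_2^2\,ds=E(u(\cdot,\tau))-E(u(\cdot,t))$ for $0<\tau<t<T_{max}$. (Here the energy is finite for $t>0$ by parabolic smoothing, even when $u_0\in X=L_R(\Omega)$ lies outside $H_0^1$.) The monotonicity of $E$ already produces the dichotomy underlying (i) and (ii): either \eqref{E-pos} holds, or $E(u(\cdot,t_0))<0$ for some $t_0$, whence $E(u(\cdot,t))\le E(u(\cdot,t_0))<0$ for all $t\in(t_0,T_{max})$. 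Throughout I write $y(t):=\tfrac12\|u(\cdot,t)\|_2^2$, so that testing the equation with $u$ gives $y'(t)=-\langle E'(u),u\rangle$.

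For part (i) I would first upgrade the energy bound to a bound in the energy space $H_0^1(\Omega)$. A direct computation yields the algebraic identity
$$(q+1)E(u)+y' = \frac{q-1}{2}\int_\Omega(|\nabla u|^2+u^2)\,dx + \frac{q-3}{4}\,\lambda\int_\Omega\int_\Omega\frac{|u(x)|^2|u(y)|^2}{|x-y|}\,dy\,dx.$$
When $q\ge3$ both coefficients on the right are nonnegative, so $E\ge0$ together with $|y'|\le\|u\|_2\|u_t\|_2\le C\|u\|_{H_0^1}\|u_t\|_2$ gives a quadratic inequality for $\|u(\cdot,t)\|_{H_0^1}$, hence $\|u(\cdot,t)\|_{H_0^1}^2\le C(E(u(\cdot,\tau))+\|u_t(\cdot,t)\|_2^2)$. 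For $q\in(2,3]$ the Hartree coefficient is negative and the identity no longer closes; here I would exploit the starshapedness of $\Omega$ through the Rellich--Pohozaev identity obtained by testing the equation with $x\cdot\nabla u$, whose boundary term $\int_{\partial\Omega}(x\cdot\nu)|\partial_\nu u|^2$ has a favourable sign. Because the nonlocal term carries a scaling homogeneity different from that in $\langle E'(u),u\rangle$, this second relation lets me bound the Hartree term by $\|u\|_{H_0^1}^2$ plus controlled quantities and thus recover the $H_0^1$ bound. From the $H_0^1$ (equivalently $L_{q+1}$) bound I would then run the bootstrap of \cite{Q99} in the nonlocal form of \cite{I12}: estimating $|u|^{q-1}u$ and the Choquard term \eqref{F3} in successive $L_r$-scales by the smoothing of the semigroup of $-\Delta+1$ and the Hardy--Littlewood--Sobolev inequality (${\mathcal F}_3(u)$ being controlled through $\|u\|_{12/5}$), one raises the bound step by step up to the norm of $X$, uniformly in $t$. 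This uniform bound rules out blow-up, giving $T_{max}=\infty$ together with \eqref{AE2}.

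For part (ii) I would run a Levine-type concavity argument on $y$. Rewriting the identity above gives
$$y'(t) = -2E(u) + \frac{q-1}{q+1}\int_\Omega|u|^{q+1}\,dx - \frac{\lambda}{2}\int_\Omega\int_\Omega\frac{|u(x)|^2|u(y)|^2}{|x-y|}\,dy\,dx,$$
where now $-2E(u)\ge-2E(u(\cdot,t_0))>0$; the subcritical Hartree term is absorbed by interpolating $L_{12/5}$ between $L_2$ and $L_{q+1}$ and applying Young's inequality. Combined with the dissipation identity and the lower bound $\|u_t\|_2^2\ge(y')^2/(2y)$ from Cauchy--Schwarz, this yields a differential inequality of the form $y\,y''\ge(1+\alpha)(y')^2$ with $\alpha>0$, forcing $y$ and hence the dissipated energy $\int\|u_t\|_2^2$ to diverge, so that $E(u(\cdot,t))\to-\infty$. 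That $\|u(\cdot,t)\|_X\to\infty$ then follows from the bootstrap of part (i): a bound on $\|u(\cdot,t)\|_X$ near $T_{max}$ would propagate to an $H_0^1$ bound and keep $E$ bounded below, contradicting $E\to-\infty$. Finally, when $q\ge3$ the power nonlinearity dominates the quartic (defocusing) Hartree term in the concavity inequality, so $\alpha$ can be taken large enough to force $y\to\infty$ in finite time, giving $T_{max}<\infty$; the a priori estimate \eqref{AE} on $[0,T_{max}-\delta)$ then follows from the same bootstrap, reducing to energy-space data by smoothing, exactly as in \cite{Q03,I12}. For $q\in(2,3)$ the quartic term may balance the power term, and only infinite-time divergence of the energy is obtained, which is why finite maximal time is asserted only for $q\ge3$.

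The step I expect to be the main obstacle is the range $q\in(2,3)$: there the energy identity carries the wrong sign on the Hartree term, so the basic $H_0^1$ estimate fails and one is forced to bring in the Rellich--Pohozaev identity and the geometric starshapedness hypothesis to close it --- this is the point where the present result goes beyond \cite{I12}. A secondary difficulty is keeping the bootstrap uniform in $t$ while controlling the nonlocal nonlinearity across the $L_r$-scales without sacrificing the full subcritical range up to $q=5$.
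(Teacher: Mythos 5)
Your toolkit (energy dissipation, the $u$-multiplier identity, the Pohozaev identity under starshapedness, smoothing plus bootstrap, an ODE blow-up argument for $q>3$) coincides with the paper's, and your identity $(q+1)E+y'=\frac{q-1}2\|u\|_{1,2}^2+\frac{q-3}4\lambda I$ is correct. But your proof of \eqref{TfiniteAE} breaks down exactly at the borderline case $q=3$, which is one of the main points of the theorem. Write $w:=\|u\|_{q+1}^{q+1}$, $I:=\int_\Omega\int_\Omega|u(x)|^2|u(y)|^2|x-y|^{-1}\,dy\,dx$, $z:=-E$. Your plan is to absorb $\frac\lambda2 I$ into $\frac{q-1}{q+1}w$ by interpolation and Young's inequality; but at $q=3$ both terms are quartic ($I\le C\|u\|_{12/5}^4\le C'\|u\|_4^4=C'w$ on a bounded domain), so absorption costs a fixed multiplicative constant rather than a lower power of $y$, and since no smallness of $\lambda$ is assumed you cannot ``take $\alpha$ large enough'' (this is precisely why \cite{LM15} needs $\lambda$ small). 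Equivalently, the only combination $y'+cE$ that eliminates $I$ is $c=4$, and at $q=3$ this simultaneously annihilates the coefficient $1-\frac{c}{q+1}$ of $w$, leaving only $y'\ge 4z$, i.e.\ linear growth. Note also that the interpolation bound $I\le\eps w+C_\eps$ requires $\|u\|_2$ bounded, which is exactly what is unavailable in part (ii). The paper's $q=3$ argument is genuinely different: it combines the Pohozaev inequality with the $u$-multiplier and energy identities, introduces $\eps(t)\in(0,1)$ by $\eps(t)w=w-\lambda I=2\|u\|_{1,2}^2+4z$, and proves $z_t\ge cz^{3/2}$ when $\eps(t)>\frac14$, while $z_t\ge cw\ge cz/\eps(t)$ (via the Pohozaev bound $\frac54\lambda I-\frac34 w\le Cz_t$) and hence $z_t\ge cz^{4/3}$ when $\eps(t)\le\frac14$; this forces $z=-E$ to blow up in finite time. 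This is where starshapedness enters for $q=3$; your sketch never uses it there.

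There are two further gaps. First, for $q\in(2,3)$ you assert \eqref{E-neg} but provide no mechanism: your concavity argument fails there for every $\lambda>0$ (same absorption problem), and ``only infinite-time divergence of the energy is obtained'' is a claim, not a proof. The paper's mechanism is: if $E$ stayed bounded below, then $T_{max}(u_0)=\infty$ and $\limsup_{t\to\infty}\|u(t)\|_X<\infty$, so by smoothing and compactness the $\omega$-limit set contains a steady state $u_S$ with $E(u_S)\le E(u(t^*))<0$; but \eqref{estEut} evaluated at a steady state ($u_t\equiv0$) gives $\frac{q-2}4\bigl(\|u_S\|_{1,2}^2+\lambda I(u_S)\bigr)+\frac{q-2}{q+1}\|u_S\|_{q+1}^{q+1}\le(q+1)E(u_S)<0$, a contradiction. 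Thus the Pohozaev identity is used as a nonexistence theorem for negative-energy equilibria --- an idea absent from your proposal. Second, in part (i) your bound $\|u(t)\|_{1,2}^2\le C\bigl(E+\|u_t(t)\|_2^2\bigr)$ controls $u$ only at times where $\|u_t\|_2$ is controlled (it is merely square-integrable in $t$), and \eqref{AE2} demands a constant depending only on $\|u_0\|_X$, uniformly in $t$ and $u_0$. The paper closes these holes with three steps you omit: a Cazenave--Lions/\cite{Q99} type bound on finite time horizons, whose key point is that once $\|u\|_2$ is bounded by the energy one has $\lambda I\le\eps\|u\|_{q+1}^{q+1}+C_\eps$; a ``good times plus uniform smoothing'' argument giving $\limsup_{t\to\infty}\|u(t)\|_X\le C(C_0)$; and a final compactness/contradiction argument producing the uniform constant in \eqref{AE2}. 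Without these, your part (i) yields bounds only at selected times and with constants depending on the individual solution.
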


\begin{corollary} \label{cor23}
Let $q,\lambda$ and $\Omega$ be as in Theorem~\ref{thm23}.
Then problem \eqref{pbmSPS} possesses both positive and sign-changing
steady states. If $\Omega$ is a ball in $\R^3$, 
then \eqref{pbmSPS} possesses infinitely many radial sign-changing solutions.
\end{corollary}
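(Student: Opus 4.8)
The plan is to read the desired steady states as the equilibria of the semiflow generated by \eqref{pbmSPS} and, simultaneously, as the critical points of the energy $E$ from \eqref{E-SPS}, and then to extract them by combining the energy dichotomy of Theorem~\ref{thm23} with a minimax/threshold scheme. First I would fix the variational and dynamical framework on $H:=H^1_0(\Omega)$ with norm $\|u\|^2=\int_\Omega(|\nabla u|^2+u^2)$: one checks that $E\in C^1(H)$, that its nonlinear part (the subcritical power term, $q+1<6$, and the Hardy--Littlewood--Sobolev-subcritical nonlocal term) is weakly continuous with compact derivative, and that $E$ is a strict Lyapunov functional, $\frac{d}{dt}E(u(\cdot,t))=-\int_\Omega u_t^2\le0$. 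Two structural facts drive the argument: the origin is asymptotically stable (the operator $-\Delta+1$ under Dirichlet conditions has positive spectrum) with $E(0)=0$ a strict local minimum, and $E$ is unbounded below. The latter is seen by testing $E$ on a family $u_\mu$ concentrating at an interior point: a scaling count shows that for every $q\in(2,5)$ the power term beats both the quadratic and the quartic nonlocal term in a suitable window of concentration rates, so $E(u_\mu)\to-\infty$. In particular the functional has a mountain-pass geometry for the whole range $q\in(2,5)$.

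The key point is that the Palais--Smale condition is delicate here, and this is exactly where Theorem~\ref{thm23} enters as a substitute. By part~(i), any trajectory that keeps $E\ge0$ (as in \eqref{E-pos}) is global and bounded in $X$, hence, by parabolic smoothing, relatively compact in $H$, so its $\omega$-limit set is a nonempty compact set of equilibria; by part~(ii), if $E$ ever becomes negative then $E\to-\infty$ along the orbit. This supplies the compactness ordinarily guaranteed by Palais--Smale, but now through the semiflow, which plays the role of the deformation in minimax theory. For the positive steady state I would run a threshold argument: along a ray $s\mapsto s\varphi$ with $\varphi>0$ (and $E(s_0\varphi)<0$ for some $s_0$, available since $E$ is unbounded below), small data decay to $0$, while the orbit from $s_0\varphi$ has $E\to-\infty$; the threshold value $s^\ast$ separating these behaviours produces an orbit that stays in $\{E\ge0\}$, is therefore global and bounded, and does not decay to $0$. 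Its $\omega$-limit set then consists of nontrivial equilibria, and since the positive cone is invariant under the flow (by the parabolic maximum principle, the nonlocal factor entering as a nonnegative potential) the limit is a positive solution by the strong maximum principle.

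For a sign-changing steady state I would set up a minimax between the invariant positive and negative cones---equivalently, constrain $E$ to the nodal set $\{u:u^\pm\neq0\}$ and minimise there---using the same flow-based compactness; the resulting equilibrium cannot lie in either cone and is thus sign-changing. On a ball I would pass to the radial subspace $H^1_{0,\mathrm{rad}}(\Omega)$, where the embedding into $L^{q+1}$ is compact and $E$ is even, and obtain infinitely many solutions by a $\mathbb{Z}_2$-symmetric minimax indexed by genus (or, equivalently, by minimising $E$ over radial functions with exactly $k$ sign changes for each $k\in\mathbb{N}$). The associated critical levels tend to $+\infty$, so for large $k$ the solutions exceed the positive ground-state level and must change sign, yielding infinitely many radial sign-changing steady states; Theorem~\ref{thm23} again furnishes the compactness at every level.

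The main obstacle is the compactness in the range $q\in(2,3)$. There the nonlocal quartic term dominates the subquartic power nonlinearity along rays, the Ambrosetti--Rabinowitz relation fails to bound Palais--Smale sequences, and the usual variational compactness is lost; this is precisely why Theorem~\ref{thm23} is invoked, and why its proof for $q\le3$ rests on the Pohozaev identity and the starshapedness of $\Omega$, which reappear here as the hypotheses of the corollary. A secondary difficulty is that the nonlocal term does not decouple over nodal domains---the cross energy $\iint |u^+(x)|^2|u^-(y)|^2/|x-y|\,dx\,dy$ is strictly positive---so the Nehari decomposition for sign-changing solutions is not straightforward; restricting to the flow-invariant cones, and on the ball to the well-ordered radial nodal structure, is what keeps the construction tractable.
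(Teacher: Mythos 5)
Your overall strategy---use the dichotomy of Theorem~\ref{thm23} plus the asymptotic stability of $0$ as a substitute for Palais--Smale, and read steady states off $\omega$-limit sets of trajectories on the boundary of the domain of attraction---is indeed the paper's strategy (and that of \cite{Q04}). Your threshold argument for the positive steady state is essentially the right mechanism, but it contains an error in the range $q\in(2,3)$: you justify $E(s_0\varphi)<0$ ``since $E$ is unbounded below,'' yet for $q<3$ one has $E(s\varphi)\to+\infty$ along \emph{every} ray, because the quartic nonlocal term $\sim s^4$ dominates $s^{q+1}$ (the paper states explicitly that \eqref{Eray} fails for $q\in(2,3)$; unboundedness below comes only from concentration, not dilation). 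This is fixable---take the ray direction $\varphi$ itself to be a concentrated positive spike with $E(\varphi)<0$, exactly the paper's $v_1(x)=M^2(1-M|x|)_+$ with $M$ large, and run the threshold on the segment $[0,\varphi]$---but as written the step is wrong, and it matters because it is precisely the case $q<3$ that the corollary is after.

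The genuine gaps are in the sign-changing and ``infinitely many'' parts. Minimizing $E$ on the nodal set $\{u:u^\pm\neq0\}$ is vacuous ($\inf E=-\infty$ there, by adding a small bump of opposite sign to a negative-energy spike), and you yourself note that the Nehari-type decomposition fails because the nonlocal term does not decouple over nodal regions; the cone-minimax and genus schemes you sketch would require a deformation/compactness apparatus that you never construct, and your claim that high minimax levels force sign changes is unsupported. The paper avoids all of this with two ingredients you are missing. First, it builds finite-dimensional spaces $X_k=\mathrm{span}\{v_1,\dots,v_k\}$ of spikes at \emph{separated scales} $M_i=M^{1+(i-1)\delta}$ and proves, by a careful combinatorial estimate, that $E(v)<0$ on the whole set $\{\sum_i|\alpha_i|=1\}$; this makes the component $D_k$ of the domain of attraction of $0$ inside $X_k$ a bounded open neighborhood of $0$, so its relative boundary $\partial D_k$ is a compact set of initial data whose trajectories satisfy \eqref{E-pos}, are global and bounded by Theorem~\ref{thm23}(i), and have $\omega$-limit sets consisting of nontrivial equilibria. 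The topological arguments of \cite{Q04} applied to $\partial D_2$ then yield one positive and one sign-changing steady state. Second, for the ball, the paper observes that every $v\in X_k$ is radial with at most $k-1$ sign changes; combined with the monotonicity of the zero number along the radial flow (available here because the nonlocal term enters only as a bounded multiplicative potential $-\lambda V_u(x,t)u$) and the arguments of \cite{Q04,I13}, this pins down, for each $i=0,\dots,k-1$, a datum in $\partial D_k$ whose $\omega$-limit contains a steady state with exactly $i$ sign changes. Without the spike-span geometry and the zero-number control, your construction does not produce sign-changing equilibria, let alone infinitely many radial ones.
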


\begin{remark} \rm
The nonlinear terms in problem \eqref{pbmSPS} have opposite sign.
If $q\in(2,3)$, 
then it is not easy to compare the corresponding terms in the potential $\Phi_4$
and --- unlike in the case $q\geq3$ --- we are not able to prove blow up in finite time
if the energy becomes negative.
The difference between the cases $q<3$ and $q>3$ can be explained as
follows:
Set $X:=H^1_0(\Omega)$.
If $q\in(3,p_S)$ and $v\in X\setminus\{0\}$, then 
\begin{equation} \label{Eray}
\lim_{t\to\infty} E(tv)=-\infty.
\end{equation}
If $q\in(2,3)$, then \eqref{Eray} fails but one still has
$\inf E:=\inf\limits_{v\in X}E(v)=-\infty$.
Notice also that in the scaling invariant case $q=2$ one has
\begin{equation} \label{lambdastar}
 \lambda^*:=\sup\{\lambda>0:\inf E=-\infty\} \in(0,\infty).
\end{equation}

Similar phenomena appear also in other problems with competing nonlinearities.
For example, consider the problem
\begin{equation} \label{eqCFQ}
\begin{aligned}
    u_t &= \Delta u-\lambda|u|^{p-1}u,    &\qquad& \hbox{in } \Omega\times(0,T),\\
    u_\nu  &= |u|^{r-1}u,  &\qquad& \hbox{on } \partial\Omega\times(0,T),\\
    u(\cdot,0) &=u_0,       &\qquad& \hbox{in }\Omega,
  \end{aligned}
\end{equation}
where $\Omega\subset\R^n$ is bounded, $u_\nu$ denotes the normal derivative of $u$,
$\lambda>0$ and $p\in(1,p_S)$, $r\in(1,r_S)$, $r_S:=\frac{n}{(n-2)_+}$.
In this case we set $X=H^1(\Omega)$ and 
$$ E(v)=\frac12\int_\Omega|\nabla v|^2\,dx+\frac\lambda{p+1}\int_\Omega|v|^{p+1}\,dx
 -\frac1{r+1}\int_{\partial\Omega}|u|^{r+1}\,dS. $$
It is known that if 
\begin{equation} \label{CQpq}
p<2r-1\quad\hbox{or}\quad (p=2r-1\ \hbox{ and }\ \lambda<r), 
\end{equation}
then this problem possesses solutions which blow up in finite time (and $\inf E=-\infty$),
while all solutions are global and bounded if $p>2r-1$ or $p=2r-1$ and $\lambda>r$, 
see \cite{CFQ91,RBT01,AMTR02}.
Condition \eqref{CQpq} also guarantees estimate \eqref{AE} if $n=1$
(or $u\geq0$ and $p<2r-1$), 
but if $n>1$, then estimate \eqref{AE} for sign-changing solutions is only known in the ``easy'' case $p<r$
corresponding to \eqref{Eray}, 
see \cite[Theorem~1.4 and Remark~3.8]{CQ04}, \cite{Q24} and the arguments in \cite{PQS07}.
Let us also mention that if $n>1$ and \eqref{CQpq} is true, 
then finite time blow-up for initial data with negative energy
is also known only if $p\le r$, 
while the proofs of blow-up in the case $p>r$
are based on the comparison principle
(which is not available in the case of \eqref{pbmSPS})
and require various restrictions on the initial data.

Next consider problem \eqref{eqCFQ}
in the scaling invariant case $p=2r-1$, and assume $n=1$.
Assume also $\lambda=r$ (notice that $r=\lambda^*$, where $\lambda^*$ is
defined in \eqref{lambdastar}).
Then all positive solutions of \eqref{eqCFQ} are global and unbounded (i.e.~\eqref{AEG} fails),
and they tend to a singular steady state as $t\to\infty$,
see \cite{CFQ91,FVW05}.
If we consider \eqref{pbmSPS}, then
similar phenomena can also be expected in the critical case $q=2$ and
$\lambda=\lambda^*$.
To have full scale-invariance as in \eqref{eqCFQ}, one should 
remove the linear term $u$ on the LHS of \eqref{pbmSPS}, i.e.
consider the equation
$$  u_t-\Delta u=|u|u-\lambda^*\Bigl(\int_\Omega\frac{|u(y)|^2}{|x-y|}\,dy\Bigr)u. $$
\qed
\end{remark}

In the case of nonlinearities ${\mathcal F}_4,{\mathcal F}_5$ and $q>3$, Ianni \cite{I12}
obtained her estimates by using the bootstrap argument in \cite{Q99}.
That bootstrap argument --- roughly speaking --- requires
a rather precise estimate of a suitable $L_r$-norm of ${\mathcal F}(u)$
by a suitable power of the potential $\Phi(u)$ (see the bootstrap condition \eqref{BC} below).
Such estimate is available for the nonlinearity ${\mathcal F}(u)=|u|^{q-1}u$ with $r=(q+1)/q$
since then $\int_\Omega|{\mathcal F}(u)|^r\,dx=C\Phi(u)$ for some constant $C$,
and the nonlinearities ${\mathcal F}_4,{\mathcal F}_5$ with $q>3$, $n=3$ and $p=2$ 
can be considered as perturbations of the nonlinearity ${\mathcal F}(u)=|u|^{q-1}u$
(cf.~also similar perturbation assumption \eqref{growthQ03}).
On the other hand, such simple perturbation argument is not available in the case of the nonlinearity ${\mathcal F}_3$
and, in fact, Ianni \cite{I12} did not use that bootstrap argument in this case:
Since $p=2<p_{CL}$ if $n=3$, it was sufficient for her to use the arguments from \cite{CL84}.     

Notice that the estimates in \cite{I12} remain true if we replace the assumption ${\mathcal Au}=-\Delta u+u$
with ${\mathcal Au}=-\Delta u$.  
Assuming \eqref{assNP1}, problem \eqref{AF} with
nonlinearity ${\mathcal F}_3$ (and general $n,p$) has been considered by several other authors,
but they either do not prove estimate \eqref{AE2} 
(see \cite{LM14,LM15,LL17,ZGRY24}, for example)
or their assumptions are very restrictive:
For example,  \cite[Remark 5]{LL20} provides such estimate only if $p<p_F$ 
which even does not cover the special case $n=3$ and $p=2$ studied in \cite{I12}.
These facts were our motivation to study the validity of \eqref{AE2} and \eqref{AE} for the problem
\begin{equation} \label{pbmChoquard}
 \left.
\begin{aligned} u_t-\Delta u&=
\Bigl(\int_\Omega\frac{|u(y)|^p}{|x-y|^{n-2}}\,dy\Bigr)|u|^{p-2}u \ &&\hbox{ in }\Omega\times(0,T), \\
                u&= 0 \ &&\hbox{ on }\partial\Omega\times(0,T), \\
                u(\cdot,0)&= u_0 \ &&\hbox{ in }\Omega.
  \end{aligned} \ \right\}
\end{equation}
The corresponding energy functional is given by
\begin{equation} \label{E-Choquard}
 E(u)=\frac12\int_\Omega|\nabla u|^2\,dx-\Phi_3(u),
\end{equation}
where
$$\Phi_3(u):= \frac1{2p}\int_\Omega{\mathcal F}_3(u)u\,dx = \frac1{2p}\int_\Omega\int_\Omega\frac {|u(x)|^p|u(y)|^p}{|x-y|^{n-2}}\,dy\,dx.$$ 
In order to avoid technical problems with non-Lipschitz nonlinearities, we assume $p\ge2$,
and since $p_S\le2$ if $n\ge6$, we also assume $n\leq5$.
Finally, if $n=2$, then \eqref{pbmChoquard} is of different nature
and estimate \eqref{AE} can be easily obtained for any $p>1$ (see the arguments in \cite{R06}),
hence we also assume $n\ge3$.
We have the following theorem:

\begin{theorem} \label{thmChoquard}
Assume that $\Omega$ is bounded and smooth, $3\le n\le5$ and $p\in[2,p^*)$, where
$$p^*:=p_S-\frac1{n-2}\,\frac8{2(n-2)\sqrt{n(n+3)}+2n^2-n-4}. $$
Let $u$ be a 
solution of \eqref{pbmChoquard} with $u_0\in X$,
where $X=L_q(\Omega)$ and $q\in(\frac n2(p-1),\infty]$.
Then estimate \eqref{AE} is true.
Estimate \eqref{AE} is also true with $\delta=0$ and $T_{max}(u_0)<\infty$ 
provided the energy $E(u(\cdot,t))$
remains bounded for $t\in (0,T_{max}(u_0))$.

Consequently, we have the following:

(i) If $u$ blows up at $T<\infty$, then $E(u(t))\to-\infty$ as $t\to T-$.

(ii) The maximal existence time $T_{max}:X\to(0,\infty]:u_0\mapsto T_{max}(u_0)$ is continuous.

(iii) Let ${\mathcal G}:=\{u_0\in X:T_{max}(u_0)=\infty\}$. Then ${\mathcal G}$ is closed.
If $u_0\in\partial{\mathcal G}$, then the $\omega$-limit set $\omega(u_0)$ of solution $u$ of \eqref{pbmChoquard}
consists of nontrivial steady states.
\end{theorem}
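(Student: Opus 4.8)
The plan is to follow the energy-and-bootstrap strategy of Cazenave--Lions \cite{CL84} and the author \cite{Q99,Q03}, adapted to the nonlocal Choquard nonlinearity as initiated by Ianni \cite{I12}. The starting point is the gradient structure of \eqref{pbmChoquard}: along the flow the energy \eqref{E-Choquard} is nonincreasing, with $\frac{d}{dt}E(u(\cdot,t))=-\|u_t(\cdot,t)\|_2^2$, while the nonlinearity obeys the homogeneity identity $\int_\Omega\mathcal{F}_3(u)u\,dx=2p\,\Phi_3(u)$ already recorded below \eqref{E-Choquard}. Since $p\ge2$ makes the homogeneity $2p>2$, this plays the role of the superlinearity condition \eqref{assCL2}. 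Testing the equation with $u$ and inserting the energy gives $\frac12\frac{d}{dt}\|u\|_2^2=-2E(u(\cdot,t))+(2p-2)\Phi_3(u)$, and integrating the dissipation gives $\int_0^t\|u_s\|_2^2\,ds=E(u_0)-E(u(\cdot,t))$. This already produces the dichotomy behind the theorem: either $E(u(\cdot,t))$ stays bounded below, or (being nonincreasing) it decreases to $-\infty$.

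The heart of the argument is to convert boundedness of the energy first into an a priori bound on $\Phi_3(u(\cdot,t))$ and then, via the $L_r$--$L_s$ smoothing of the heat semigroup in the variation-of-constants formula, into an $L_\infty$ bound by a bootstrap on the scale of $L_r$-spaces. This requires a precise estimate of a suitable $L_r$-norm of $\mathcal{F}_3(u)$ by a power of $\Phi_3(u)$ (the bootstrap condition, cf.\ the arguments in \cite{Q99}). Writing $\mathcal{F}_3(u)=V_u\,|u|^{p-2}u$ with the Riesz potential $V_u(x)=\int_\Omega|x-y|^{-(n-2)}|u(y)|^p\,dy$, the Riesz potential estimate gives $\|V_u\|_{L_s}\le C\|u\|_{L_{pt}}^p$ whenever $\frac1s=\frac1t-\frac2n$, and the Hardy--Littlewood--Sobolev inequality gives $\Phi_3(u)\le C\|u\|_{L_{2np/(n+2)}}^{2p}$, which is controlled by $\|\nabla u\|_2$ up to $p=p_S$ through the embedding $H^1_0(\Omega)\hookrightarrow L_{2n/(n-2)}(\Omega)$. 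Estimating $\|\mathcal{F}_3(u)\|_{L_r}$ by H\"older's inequality from $\|V_u\|_{L_s}$ and a Lebesgue norm of $|u|^{p-1}$, and bounding the resulting norms of $u$ by $\|\nabla u\|_2$ (hence by $\Phi_3(u)$ and the energy), yields the desired bootstrap inequality; the precise interplay of the Riesz and H\"older exponents is exactly what produces the threshold $p^*$ in its explicit form. I expect this exponent bookkeeping to be the \emph{main obstacle}: unlike the local case $\mathcal{F}(u)=|u|^{q-1}u$, where $\int_\Omega|\mathcal{F}(u)|^r\,dx=C\Phi(u)$ holds identically, the nonlocal convolution spreads mass and tightens the admissible range, so the bootstrap only closes for $p<p^*$.

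Once the bootstrap condition holds, the general machinery of \cite{Q99,Q03} delivers $\|u(\cdot,t)\|_\infty\le C$ for $t\in[0,T_{max}(u_0)-\delta)$ and $\|u_0\|_X\le c_0$, which is \eqref{AE}; running the same argument with the energy assumed merely bounded below (instead of exploiting the $\delta$-gap) gives the version with $\delta=0$ and $T_{max}(u_0)<\infty$. Consequence (i) is then the contrapositive: if $u$ blew up at a finite $T$ with $E$ bounded below, the $\delta=0$ estimate would bound $\|u(\cdot,t)\|_X$ up to $T$, contradicting blow-up; hence $E(u(\cdot,t))\to-\infty$. Consequences (ii) and (iii) follow from \eqref{AE} together with the Lyapunov structure of $E$, exactly as in \cite{Q03,Q04}: lower semicontinuity of $T_{max}$ is automatic from local well-posedness, while \eqref{AE} prevents delayed blow-up and gives upper semicontinuity, whence continuity; closedness of $\mathcal{G}$ is equivalent to openness of $\{T_{max}<\infty\}$, which is the upper-semicontinuity statement. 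Finally, for $u_0\in\partial\mathcal{G}$ the solution is global (as $\mathcal{G}$ is closed) and bounded by \eqref{AE}, so its trajectory is relatively compact and LaSalle's invariance principle forces $\omega(u_0)$ to consist of steady states; nontriviality follows from the usual threshold argument, since $\omega(u_0)=\{0\}$ would place $u_0$ in the interior of $\mathcal{G}$, contradicting $u_0\in\partial\mathcal{G}$.
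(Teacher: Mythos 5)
Your overall skeleton (energy dissipation, the space--time bound on $\Phi_3(u(t))$, a Quittner-type bootstrap closed by an estimate of $\|{\mathcal F}_3(u)\|_r$ by a power of $\Phi_3(u)$, then consequences (i)--(iii) via well-posedness and the Lyapunov structure) matches the paper. But the step you yourself identify as the heart of the matter --- verifying the bootstrap condition \eqref{BC} --- is exactly where your plan fails. In the bootstrap one has $\beta'\theta=\frac{2\theta}{2-(1-\theta)\tilde Q}\ge 1$ whenever $\tilde Q\ge2$, so condition \eqref{bc}, $(1-\xi)\beta'\theta<1$, can only hold if $\|{\mathcal F}_3(u)\|_r\le C\bigl(1+\Phi_3(u)\bigr)^{1-\xi}$ with $\xi>0$ \emph{strictly}, i.e.\ with a strictly sublinear power of $\Phi_3$. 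Your route --- H\"older plus the Hardy--Littlewood--Sobolev estimate for the Riesz potential, then ``bounding the resulting norms of $u$ by $\|\nabla u\|_2$'' --- cannot deliver this: bounded energy only gives $\|\nabla u\|_2^2\le 2E+2\Phi_3\le C(1+\Phi_3)$, so estimating the $2p-1$ powers of $u$ in ${\mathcal F}_3(u)=v_u|u|^{p-2}u$ through $\|\nabla u\|_2$ yields at best $\|{\mathcal F}_3(u)\|_r\le C(1+\Phi_3(u))^{(2p-1)/2}$, an exponent $\ge 3/2$ for $p\ge2$, which violates \eqref{bc} already at the first step $Q=2$. (The variant in which one instead places the factors of $u$ in the previously bootstrapped $L_s$-norms, $s<s_Q$, also fails: HLS then forces $\frac1r>\frac{2p-1}{s_Q}-\frac2n$, and for $p<p_S$ and $s_Q$ near $s^*=\frac n2(p-1)$ this keeps $\frac1r$ bounded away from $\frac1{s'}$, so the requirement $\beta>1$ caps $\tilde Q$ and the bootstrap stalls before reaching $Q_p$.) Thus your assertion that ``the interplay of the Riesz and H\"older exponents produces the threshold $p^*$'' is not substantiated, and no HLS-based bookkeeping of this kind produces it.

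The missing idea is the paper's estimate \eqref{FPhi}, $\|{\mathcal F}_3(u)\|_{r_2}\le\Phi(u)^{(2p-1)/(2p)}$ with $r_2=\frac{2np}{2np-n-2}$, whose proof uses no HLS at all but rather the fact that $v_u$ is (a multiple of) the Newtonian potential of $|u|^p$, hence solves $-\Delta v_u=c_n|u|^p$ in $\R^n$: integration by parts and Sobolev give $\|v_u\|_{2^*}^2\le C\int|\nabla v_u|^2\,dx=C'\int v_u|u|^p\,dx=C''\Phi(u)$, i.e.\ the potential itself is controlled by $\Phi^{1/2}$. Note HLS goes the wrong way here: it gives $\|v_u\|_{2^*}\lesssim\|u\|_{2np/(n+2)}^p$ while also $\Phi\lesssim\|u\|_{2np/(n+2)}^{2p}$, so the HLS bound for $\|v_u\|_{2^*}$ dominates $\Phi^{1/2}$ instead of being dominated by it. Splitting $v_u|u|^{p-1}=\bigl(v_u^{(p-1)/p}|u|^{p-1}\bigr)v_u^{1/p}$ and applying H\"older then yields \eqref{FPhi} with $\xi=\frac1{2p}>0$; condition \eqref{bc} becomes \eqref{bcxi}, and minimizing over $Q$ (at $Q=n+2+\sqrt{n^2+3n}$) is precisely what produces $p^*$. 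A secondary point: for \eqref{AE} itself (with $\delta>0$) the energy is \emph{not} assumed bounded below; the paper derives the quantitative lower bound $E(u(t))\ge -C(\delta)$ on $(\delta,T-\delta)$ from the ODE blow-up argument \eqref{uut}--\eqref{boundE}, uniformly over $\|u_0\|_X\le c_0$ including solutions that blow up. Your text gestures at this via ``the $\delta$-gap,'' but that derivation must be made explicit, since the uniformity of \eqref{AE} over blowing-up solutions rests on it.
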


Our result does not cover the full subcritical range, but our condition $p<p^*$
is not far from the conjectured optimal condition $p<p_S$
(and is significantly better than the condition $p<p_{CL}$). 
Table~\ref{tab1} compares values $p_S,p^*,p_{CL}$ and the assumptions
in \cite{I12} ($p=2$, $n=3$) and \cite{LL20} ($p<p_F$) for $n\in\{3,4,5\}$. 

\def\vb{\hbox{\vbox to5truemm{\vss}}}
\renewcommand{\arraystretch}{1} 
\begin{table}[ht]
\begin{center}
\begin{tabular}{ |c||c|c|c|c|c|  }
\hline 
 \vb $n$& $p_S$ & $p^*$ & $p_{CL}$ & \cite{I12} & $p_F$ \cite{LL20} \\  [0.5ex]
\hline
 \vb 3 & 5 & $>4.589$ & 3.4 & $p=2$ & $1.\overline{6}$  \\  [0.5ex] 
\hline
 \vb 4 & 3 & $>2.911$ & 2.5 & & $1.5$ \\  [0.5ex]
\hline
 \vb 5 & $2.\overline{3}$ & $>2.299$ & $2.\overline{09}$ & & $1.4$ \\  [0.5ex]
\hline
\end{tabular}
  \end{center}
\kern2mm 
\caption{Conjectured and sufficient upper bounds on $p$ guaranteeing \hskip0mm plus10mm 
\hbox{estimate~\eqref{AE2}} for problem \eqref{pbmChoquard}.}
\label{tab1}
\end{table}

Similarly as in \cite{Q04,I13},
assertion (iii) in Theorem~\ref{thmChoquard} combined with some topological arguments
can also be used in order to prove the existence of sign-changing steady states
(or infinitely many radial sign-changing steady states if $\Omega$ is a ball),
cf.~Corollary~\ref{cor23} above.

A stronger version of a priori estimates \eqref{AE}
can often be obtained by scaling and doubling arguments provided
the corresponding parabolic Liouville type theorem is known, 
see \cite{Q21,Q22,Q22a,QS24} and the references therein
and see also Remark~\ref{remFwd}.
However, this approach does not seem to be suitable for problems
with ${\mathcal A}=(-\Delta)^\alpha$, $\alpha\in(0,1)$: 
For example, even in the case of the model nonlinearity
${\mathcal F}(u)=|u|^{p-1}u$, the corresponding
parabolic Liouville type theorem is known only under the very restrictive condition $p\leq1+\frac{2\alpha}n$,
see \cite{S75,DN23}. 
If $\Omega=\R^n$, ${\mathcal A}=(-\Delta)^\alpha$, $\alpha\in(0,1)$ and ${\mathcal F}(u)=|u|^{q-1}u-u$, for example,
then one can use the arguments in \cite{Q03}
to obtain estimates \eqref{AE} in the subcritical case, 
since the required  maximal $L_p$-regularity is well known
and the domain of ${\mathcal A}$ is just an analogue of the domain in the case $\alpha=1$.
If $\Omega$ is bounded, ${\mathcal A}=(-\Delta)^\alpha$ with $\alpha\in(0,1)$ and we consider the Dirichlet problem, 
then the required maximal regularity has also been proved (see \cite{G15,AG24} and the references therein),
but the domain of the operator $(-\Delta)^\alpha$ with the Dirichlet conditions
is not a simple analogue of the domain in the case $\alpha=1$. Fortunately,
this fact does not have a strong influence on the proof in \cite{Q99}
so that we can prove estimate \eqref{AE} for solutions of the problem 
\begin{equation} \label{fracLap}
 \left.
\begin{aligned} u_t+(-\Delta)^\alpha u&=|u|^{p-1}u \ &&\hbox{ in }\Omega\times(0,T) \\
                 u&= 0 \ &&\hbox{ in }(\R^n\setminus\Omega)\times(0,T) \\
                u(\cdot,0)&= u_0 &&\hbox{ in }\Omega
  \end{aligned} \ \right\}
\end{equation}
in the full subcritical range $p\in(1,p_S(\alpha))$, where
$$ p_S(\alpha):=\frac{n+2\alpha}{n-2\alpha}.$$
In this case, the energy is defined by
\begin{equation} \label{E-frac}
 E(u)=\frac12\int_\Omega|(-\Delta)^{\alpha/2}u|^2\,dx-\frac1{p+1}\int_\Omega|u|^{p+1}\,dx.
\end{equation}

\begin{theorem} \label{thmFracLap}
Let $\Omega\subset\R^n$ be bounded and smooth, $\alpha\in(0,1)$, $p\in(1,p_S(\alpha))$ and 
$u_0\in X:=L_q(\Omega)$, where $q\in(\max(1,\frac{n}{2\alpha}(p-1)),\infty]$.
Let $u$ be a solution of \eqref{fracLap}.
Then estimate \eqref{AE} is true.
\end{theorem}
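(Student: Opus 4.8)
My plan is to run the bootstrap scheme of \cite{Q99} in the fractional setting; as the introduction indicates, the scheme itself transfers, and the work lies in supplying its functional-analytic inputs for the Dirichlet operator $(-\Delta)^\alpha$. Write $\Phi(u)=\frac1{p+1}\int_\Omega|u|^{p+1}\,dx$, so that $E(u)=\frac12\|(-\Delta)^{\alpha/2}u\|_2^2-\Phi(u)$ by \eqref{E-frac}. Testing \eqref{fracLap} with $u_t$ and with $u$ gives the two identities
\begin{equation*}
\frac{d}{dt}E(u(t))=-\|u_t(t)\|_2^2\le0,
\qquad
(p-1)\Phi(u)=\frac{d}{dt}\Bigl(\tfrac12\|u\|_2^2\Bigr)+2E(u),
\end{equation*}
on which everything rests. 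First I would assemble the needed tools for the Dirichlet fractional Laplacian: the smoothing estimate $\|e^{-t(-\Delta)^\alpha}\|_{L_r\to L_s}\le C\,t^{-\frac n{2\alpha}(\frac1r-\frac1s)}$ for $r\le s$, the identification of its fractional-power and interpolation spaces, and the maximal $L_p$-regularity established in \cite{G15,AG24}. The hypothesis $q>\frac n{2\alpha}(p-1)$ is exactly the scaling condition making $L_q(\Omega)$ a subcritical phase space and providing a local existence time uniform for $\|u_0\|_X\le c_0$.

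The algebraic core enabling \cite{Q99} is that, for ${\mathcal F}(u)=|u|^{p-1}u$,
\[
\|{\mathcal F}(u)\|_{(p+1)/p}^{(p+1)/p}=\int_\Omega|u|^{p+1}\,dx=(p+1)\Phi(u),
\]
so the $L_{(p+1)/p}$-norm of the nonlinearity is controlled by $\Phi^{p/(p+1)}$; this is precisely the bootstrap condition \eqref{BC} with optimal exponent, and it holds for every $p>1$. Subcriticality $p<p_S(\alpha)$ enters only later, in ensuring that the time-singularity exponents arising when this bound is inserted into the Duhamel integral remain integrable and that the induced gain of integrability is uniform, so that the iteration reaches $L_q$ in finitely many steps.

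The substantive part is to bound $E$ on $[0,T_{max}(u_0)-\delta)$. An upper bound $E(u(t))\le C(c_0)$ follows from monotonicity together with the uniform local bound and instantaneous smoothing at a small fixed time $t_1=t_1(c_0)$. The lower bound is a lifespan estimate: using the second identity above and the Hölder inequality $\Phi(u)\ge c_\Omega\|u\|_2^{p+1}$, the quantity $z(t):=\frac12\|u(t)\|_2^2$ satisfies a superlinear differential inequality $z'\ge c\,z^{(p+1)/2}-2E(u)$; since $E$ is nonincreasing, $E(u(t_0))\le-K$ forces blow-up within a time $\tau(K)$ with $\tau(K)\to0$ as $K\to\infty$, whence $E(u(t))\ge-K_0(\delta)$ throughout $[0,T_{max}-\delta)$ (and $E\ge0$ when $T_{max}=\infty$). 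Consequently the total dissipation $\int_{t_1}^{T_{max}-\delta}\|u_t\|_2^2\,dt=E(u(t_1))-E(u(T_{max}-\delta))$ is bounded by some $M(\delta,c_0)$. With $E$ and the dissipation under control, I would combine the energy level with the fractional Gagliardo--Nirenberg inequality (subcritical since $p<p_S(\alpha)$) and the second identity to extract a base space-time integrability bound for $u$, and then upgrade it through the variation-of-constants formula, iterating the $L_r\to L_s$ smoothing exactly as in \cite{CL84,Q99}, until $\|u(\cdot,t)\|_{L_q}\le C(\delta,c_0)$, which is \eqref{AE}.

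I expect the main obstacle to be precisely the point flagged in the introduction: the domain of the Dirichlet fractional Laplacian is not the naive analogue of the case $\alpha=1$, its fractional-power spaces failing to coincide with the expected Sobolev spaces near $\partial\Omega$. Thus the embeddings, interpolation inequalities and smoothing bounds that \cite{Q99} uses without comment must be re-derived in this framework from \cite{G15,AG24} and from the known boundary behaviour of solutions, while checking that none of the constants in the bootstrap recursion degenerate. A secondary technical matter is to keep all constants uniform over $\|u_0\|_X\le c_0$ and to absorb the initial layer so that \eqref{AE} holds up to the closed endpoint $t=0$.
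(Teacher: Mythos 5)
Your proposal is correct and follows essentially the same route as the paper's proof: the same energy identities, the same ODE/lifespan argument giving the two-sided energy bound and the dissipation estimate on $[0,T_{max}-\delta)$, the same algebraic identity $\|{\mathcal F}(u)\|_{(p+1)/p}\le C\Phi(u)^{p/(p+1)}$ serving as the bootstrap condition \eqref{BC}, and the same reliance on maximal $L_p$-regularity and the transmission/interpolation-space embeddings for the Dirichlet fractional Laplacian from \cite{G15,AG24}, which the paper assembles in Proposition~\ref{prop4}. The paper then runs the \cite{Q99} bootstrap on the space-time quantities $\int\Phi(u(t))^Q\,dt$ with increments $\tilde Q-Q$ bounded below, converting them into uniform $L_s$ bounds by interpolation until $s$ exceeds $\frac{n}{2\alpha}(p-1)$ --- precisely the scheme you outline.
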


The consequences of estimate \eqref{AE} in Theorem~\ref{thmChoquard}
are also true for problem \eqref{fracLap}.     

We could also consider other nonlocal operators, for example
$${\mathcal A}(u)=-a\Bigl(\int_\Omega|\nabla u|^2\Bigr)\Delta u,\quad
\hbox{where 
$a(\cdot)\geq a_0>0$ is continuous, $\lim\limits_{s\to\infty}a(s)=a_\infty$.}$$
However, since the proofs would not involve important new ideas,
we refrain from doing so.
Similarly, our arguments can be straightforwardly applied to various parabolic systems. 

\begin{remark} \label{rempS} \rm {\bf The role of critical exponents.}
To see the role of the critical exponents $p_S$ and $q^*:=\frac n2(p-1)$ in our estimates
and their consequences, consider the model scalar problem \eqref{MP},
where $\Omega\subset\R^n$ is a ball and $p>1$.

If $X:=L^\infty(\Omega)$ and we consider global
solutions, then estimate \eqref{AE2} is true if and only if $p<p_S$,
while the weaker estimate \eqref{AEG} is true if and only if $p\ne p_S$, see
\cite[Theorems~22.1, 22.4*, 28.7*]{QS19}.
The condition $p<p_S$ is necessary not only for estimates \eqref{AE} and
\eqref{AE2},
but also for most of their consequences 
(blow-up of energy, continuity of the maximal existence time,
existence of positive steady states etc).
More precisely, if $p\ge p_S$, then problem \eqref{MP} with $\Omega$ being
a ball does not possess positive steady states,
and if $p>p_S$, then there exist radially symmetric positive initial data
$u_0$ such that $T_{max}(u_0)<\infty$, $E(u(t))\ge0$ for
$t\in(0,T_{max}(u_0))$ and $T_{max}$ is not continuous at $u_0$.
In addition, if $n\ge11$ and $p$ is large enough, then 
the blow-up rate estimate \eqref{rateGK} also fails.
Since estimate \eqref{AEG} is true for all $p>p_S$,
it cannot imply the consequences listed above.

If $q^*>1$, then the condition $q\ge q*$ is sufficient and necessary for the
assertion
\begin{equation} \label{limLR}
\lim_{t\to T_{max}(u_0)}\|u(\cdot,t)\|_q=\infty \ \hbox{ whenever }\ T_{max}(u_0)<\infty,
\end{equation}
see \cite{FML85,MS19},
and see also \cite{MT24,MT23} for recent results on \eqref{limLR} if $q=q^*$
and $\Omega$ is more general.
Some $L_q$ (or $L_{q,loc}$) norms of $u(\cdot,t)$ can stay bounded
as $t\to T_{max}(u_0)<\infty$ also in the case of nonlocal problems: See  
\cite[Theorems~44.2 and~44.6]{QS19} or \cite{B02,AZ21,DKZ21}, for example. 
\qed\end{remark}

\section{Proofs}

In the whole section we implicitly assume that
\begin{equation} \label{assOmega}
\hbox{$\Omega\subset\R^n$ is bounded and smooth}
\end{equation}
and by $\|\cdot\|_r$ or $\|\cdot\|_{s,r}$ we denote the norm in the Lebesgue
space $L_r(\Omega)$
or the Bessel potential space $H^s_r(\Omega)$ respectively.
In particular, $\|u\|_{1,2}^2=\int_\Omega(|\nabla u|^2+u^2)\,dx$.
We will also use the notation $u(t):=u(\cdot,t)$ and $u(t;u_0):=u(\cdot,t;u_0)$.
If a constant $C$ depends on some norm of the solution $u$,
then we implicitly assume that this dependence is nondecreasing.
In particular, \eqref{AE2} will be written in the form
$$ 
\|u(t;u_0)\|_X \leq C(\|u_0\|_X), \quad t\in[0,\infty).
$$

\subsection{Auxiliary results}
In this subsection we study well-posedness of problems
\eqref{pbmSPS}, \eqref{pbmChoquard} and \eqref{fracLap}
in the Lebesgue spaces. These results will play an important role
in the proofs of our a priori estimates.

\begin{lemma} \label{lem1}
Let $n\ge3$ and $p\ge2$. Given $\alpha>1$ and $r>1$, define $m=m(r\alpha,n)$ by
\begin{equation} \label{eR3b}
 \frac1m=\frac1{r\alpha}+\frac2n,
\end{equation}
and let $\alpha':=\frac{\alpha}{\alpha-1}$ denote the dual exponent to $\alpha$.

(i) If $n=3$, $p\in[2,3)$ and $r>\frac n2(p-1)$, then there exist $\alpha>1$ and $R>r$ such that
\begin{equation} \label{rR3a}
 m>1,\quad R\geq\max(pm,(p-1)r\alpha')\quad\hbox{and}\quad \beta:=\frac n2\Bigl(\frac1r-\frac1R\Bigr)<\frac1{2p-1}.
\end{equation}

(ii) Let either 
\begin{equation} \label{rR1}
 r\in(1,\frac n2\frac{p}{p-1}),\quad \frac1r<2-\frac2n-\frac1p,
 \quad R=\frac{2p-1}{\frac1r+\frac{2}n},
\end{equation}
or
\begin{equation} \label{rR3}
 n=3, \quad p\in[2,3), \quad r>\frac n2(p-1),\quad \alpha>1,\quad 
 \hbox{$m,R$ satisfy \eqref{rR3a}.}
\end{equation}
Then ${\mathcal F}_3:L_R(\Omega)\to L_r(\Omega)$ satisfies
\begin{equation} \label{estF3}
\|{\mathcal F}_3(u)-{\mathcal F}_3(v)\|_r \le C(\|u\|_R^{2(p-1)}+\|v\|_R^{2(p-1)})\|u-v\|_R.
\end{equation} 
\end{lemma}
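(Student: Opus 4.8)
The plan is to read ${\mathcal F}_3$ as a composition of a Riesz-type potential with a pointwise power map and to reduce the difference estimate to Hardy--Littlewood--Sobolev (HLS) plus Hölder. Throughout I write $Kw(x):=\int_\Omega|x-y|^{-(n-2)}w(y)\,dy$, so that ${\mathcal F}_3(u)=(K|u|^p)\,|u|^{p-2}u$, and I use the HLS bound $\|Kw\|_b\le C\|w\|_a$, valid whenever $1<a<b<\infty$ and $\frac1b=\frac1a-\frac2n$ (applied to zero-extensions on $\R^n$). Since $p\ge2$, I may use the elementary pointwise inequalities
\[
 \bigl||a|^{p-2}a-|b|^{p-2}b\bigr|\le C\bigl(|a|^{p-2}+|b|^{p-2}\bigr)|a-b|,\qquad
 \bigl||a|^p-|b|^p\bigr|\le C\bigl(|a|^{p-1}+|b|^{p-1}\bigr)|a-b|.
\]
I would then split
\[
 {\mathcal F}_3(u)-{\mathcal F}_3(v)=(K|u|^p)\bigl(|u|^{p-2}u-|v|^{p-2}v\bigr)+\bigl(K(|u|^p-|v|^p)\bigr)|v|^{p-2}v,
\]
so that, after the power inequalities, everything reduces to bounding in $L_r$ two products of the type $(K|u|^p)(|u|^{p-2}+|v|^{p-2})|u-v|$ and $\bigl(K((|u|^{p-1}+|v|^{p-1})|u-v|)\bigr)|v|^{p-2}v$.

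Under \eqref{rR1} the exponents are forced. I put $|u|^p$, and likewise $(|u|^{p-1}+|v|^{p-1})|u-v|$, into $L_{R/p}$ (Hölder gives $\|(|u|^{p-1}+|v|^{p-1})|u-v|\|_{R/p}\le C(\|u\|_R^{p-1}+\|v\|_R^{p-1})\|u-v\|_R$), apply HLS with target $t$ given by $\frac1t=\frac pR-\frac2n$, and close with a single Hölder step against the remaining factor in $L_{R/(p-1)}$. The target then is exactly $\frac1t+\frac{p-1}R=\frac{2p-1}R-\frac2n=\frac1r$, which is why \eqref{rR1} prescribes $R=(2p-1)/(\frac1r+\frac2n)$. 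The two standing hypotheses of \eqref{rR1} are precisely the HLS admissibility requirements: $\frac1r<2-\frac2n-\frac1p$ is equivalent to $R>p$ (source $R/p>1$) and $r<\frac n2\frac p{p-1}$ is equivalent to $R<\frac{np}2$ (i.e.\ $t<\infty$). Collecting terms and using Young's inequality to absorb mixed products such as $\|u\|_R^p\|v\|_R^{p-2}$ into $\|u\|_R^{2(p-1)}+\|v\|_R^{2(p-1)}$ gives \eqref{estF3}.

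Under \eqref{rR3} I run the same scheme, but first peel off an outer Hölder split with the conjugate pair $(\alpha,\alpha')$, namely $\|(Kw)g\|_r\le\|Kw\|_{r\alpha}\|g\|_{r\alpha'}$. HLS with target $r\alpha$ then forces the source exponent to be $m$ as defined in \eqref{eR3b}, and its admissibility is exactly the requirement $m>1$ in \eqref{rR3a}. The point of this detour is that the exponents $pm$ and $(p-1)r\alpha'$ now appearing are no longer rigidly tied to $R$: since $\Omega$ is bounded and $R\ge\max(pm,(p-1)r\alpha')$, the embeddings $L_R(\Omega)\hookrightarrow L_{pm}(\Omega)$ and $L_R(\Omega)\hookrightarrow L_{(p-1)r\alpha'}(\Omega)$ allow me to replace every factor's norm by an $L_R$-norm, reaching \eqref{estF3} after Young again. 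This is the device that permits $R>r$, which the rigid relation of \eqref{rR1} cannot always deliver.

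It remains to produce the parameters in (i), and I expect this to be the main obstacle. I would treat \eqref{rR3a} as a feasibility problem in the single variable $\alpha>1$, taking $R$ at its least admissible value $\max(pm,(p-1)r\alpha')$ in order to make $1/R$ as large, hence $\beta$ as small, as possible. Since $pm$ increases and $(p-1)r\alpha'$ decreases in $\alpha$, the minimum of this value is attained at the balance point $pm=(p-1)r\alpha'$, where for $n=3$ a direct computation gives $R_{\mathrm{bal}}=\tfrac{3r(2p-1)}{3+2r}$, coinciding with the sharp value of \eqref{rR1}. On the other side, $\beta<\frac1{2p-1}$ is an upper constraint $R<R_{\max}:=\bigl(\frac1r-\frac2{3(2p-1)}\bigr)^{-1}$. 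The crux is the elementary equivalence $R_{\mathrm{bal}}<R_{\max}\iff r>\frac32(p-1)$, exactly the hypothesis; together with $m>1$ (equivalently $r\alpha>3$ for the chosen $\alpha$) and $R_{\max}>r$ (automatic) this pins down a nonempty set of admissible $(\alpha,R)$ with $R>r$. The delicate part, which the proof must handle with care, is that the balance point corresponds to an admissible $\alpha>1$ only when $r<\frac n2\frac p{p-1}$; for larger $r$ one must instead send $\alpha\to\infty$ and exploit the monotonicity of $\max(pm,(p-1)r\alpha')$ together with the fact that $R_{\max}=\infty$ once $r\ge\frac32(2p-1)$, and assembling these sub-ranges into one statement valid for all $r>\frac32(p-1)$, $p\in[2,3)$, is where the constraint $p<3$ and the bounded-domain embedding enter decisively.
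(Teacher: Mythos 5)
Your part (ii) is correct and is essentially the paper's own argument: the outer H\"older split with the conjugate pair $(\alpha,\alpha')$, the Hardy--Littlewood--Sobolev inequality applied to the potential $K|u|^p$, and the bounded-domain embeddings $L_R\hookrightarrow L_{pm}$, $L_R\hookrightarrow L_{(p-1)r\alpha'}$; under \eqref{rR1} your direct exponent bookkeeping (source $R/p$ for HLS, $R/(p-1)$ for the power factor) is the same computation with the paper's explicit choice $\alpha=\frac{2p-1}{p-(p-1)\frac{2r}{n}}$, for which $R=pm=(p-1)r\alpha'$. Your two-term splitting of ${\mathcal F}_3(u)-{\mathcal F}_3(v)$ together with the pointwise inequalities (valid since $p\ge2$) supplies exactly the details the paper compresses into ``analogous estimates imply \eqref{estF3}''.

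Part (i), however, contains a genuine gap, and it is precisely the step you flag and then defer. Your balance-point analysis is confined to $r<\frac n2\frac p{p-1}$, where the balance $\alpha$ is admissible; there the equivalence $R_{\mathrm{bal}}<R_{\max}\iff r>\frac32(p-1)$ does settle feasibility. But in the remaining range $\frac n2\frac p{p-1}\le r<\frac32(2p-1)$ one has $(p-1)r\alpha'>pm$ for every $\alpha>1$ (since $pm<\frac{3p}2\le(p-1)r$), so $\inf_{\alpha>1}\max\bigl(pm,(p-1)r\alpha'\bigr)=(p-1)r$ and \eqref{rR3a} admits a solution $(\alpha,R)$ if and only if $(p-1)r<R_{\max}$, i.e.\ $r>\frac32\,\frac{(2p-1)(p-2)}{p-1}$. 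This is implied by your hypotheses only when $p\le\frac{3+\sqrt5}2$; for $p\in\bigl(\frac{3+\sqrt5}2,3\bigr)$ it genuinely fails: with $n=3$, $p=2.9$, $r=3$ one has $r>\frac32(p-1)=2.85$, yet any admissible $R$ must satisfy both $R\ge(p-1)r\alpha'>5.7$ and $R<R_{\max}=\bigl(\frac13-\frac2{3\cdot 4.8}\bigr)^{-1}=\frac{36}{7}<5.2$, which is impossible. So the ``assembly of sub-ranges'' you postpone cannot be carried out: the middle range is not a technical nuisance but the precise region where statement (i) is false, and the constraint $p<3$ does not rescue it --- what your scheme actually proves is (i) under the stronger hypothesis $r>\frac n2\max\bigl(p-1,\frac{(2p-1)(p-2)}{p-1}\bigr)$, which reduces to the stated one exactly when $p\le\frac{3+\sqrt5}2$. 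For the record, the paper's own proof breaks at the same spot: its prescribed window $p-\frac r3\frac{6p-2}{2p-1}<\frac1\alpha<2-p+\frac{2r}3\frac{p-1}{2p-1}$ is approximately $(-0.31,-0.11)$ for the values above and contains no $\frac1\alpha$ with $\alpha>1$. The case $p=2$ needed in Proposition~\ref{prop3} is unaffected (then $\frac{(2p-1)(p-2)}{p-1}=0$), while the application to general $p\in[2,3)$ in the proof of Proposition~\ref{prop2} inherits this restriction.
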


\begin{proof}
(i)
Since $r>\frac n2(p-1)$, we can
choose $\alpha>1$ such that
$$ 2-p+\frac{2r}n\frac{p-1}{2p-1}>\frac1\alpha>p-\frac rn\frac{6p-2}{2p-1} $$
and then choose $z>1$ such that
$$ \min\Bigl(\frac1{(p-1)\alpha'},\frac1p\Bigl(\frac1\alpha+\frac{2r}n\Bigr)\Bigr)\ge\frac1z>1-\frac{2r}{n(2p-1)}.$$
Set $R:=zr$. Then \eqref{rR3a} is true.

(ii) 
If \eqref{rR1} is true, then set
$$ \alpha:=\frac{2p-1}{p-(p-1)\frac{2r}n} $$ 
and notice that $m>1$ and $R=mp=(p-1)r\alpha'$.

Let $u\in L_R(\Omega)$ and $w(x):=\int_\Omega\frac{|u(y)|^p}{|x-y|^{n-2}}\,dy$. 
Then using the H\"older and Hardy-Littlewood-Sobolev inequalities
(see \cite{LL01}) we obtain
$$ \begin{aligned}
  \|{\mathcal F}_3(u)\|_r &= \|w|u|^{p-2}u\|_r 
     \leq \|w\|_{r\alpha}\||u|^{p-1}\|_{r\alpha'} \\
   &\leq C\||u|^p\|_m\||u|^{p-1}\|_{r\alpha'} \\
 &= C\|u\|_{mp}^p\|u\|_{r\alpha'(p-1)}^{p-1} \leq C\|u\|_R^{2p-1},
\end{aligned}$$
hence ${\mathcal F}_3:L_R(\Omega)\to L_r(\Omega)$ is bounded on bounded sets.
Analogous estimates imply \eqref{estF3}. 
\end{proof}

\begin{proposition} \label{prop2}
Let $n\ge3$, $p\ge2$, $R>\frac n2(p-1)$. 
Let $u_0\in X:=L_R(\Omega)$ and let $E$ be defined by \eqref{E-Choquard}.
Then there exists a unique local mild solution of \eqref{pbmChoquard}.
This solution is classical for $t>0$ and the function $(0,T)\to\R:t\mapsto E(u(\cdot,t))$ is $C^1$
with $\frac{d}{dt}E(u(\cdot,t))=-\int_\Omega u_t^2(x,t)\,dx$.
In addition, 
there exist $\delta=\delta(\|u_0\|_X)>0$ and $C=C(\|u_0\|_X)$ such that
$T_{max}(u_0)>\delta$, $\|u(\cdot,t)\|_X\leq C$ for $t\in[0,\delta]$ and 
$\|u(\cdot,\delta)\|_{2(1-\eps),r}\leq C$ for any $\eps>0$ and
$r<\frac{n}2\frac{p}{p-1}$. In particular,
$\|u(\cdot,\delta)\|_\infty+\|u(\cdot,\delta)\|_{1,2}\leq C$.
Given $\eps>0$, we can choose $\delta\in(0,\eps)$, but then the constant $C$ also depends on $\eps$.
Finally, if $T_{max}(u_0)<\infty$, then $\lim_{t\to T_{max}(u_0)}\|u(\cdot,t)\|_X=\infty$.
\end{proposition}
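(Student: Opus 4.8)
The plan is to regard \eqref{pbmChoquard} as the semilinear problem $u_t-\Delta u={\mathcal F}_3(u)$ and to solve the Duhamel equation
\begin{equation*}
u(t)=e^{t\Delta}u_0+\int_0^t e^{(t-s)\Delta}{\mathcal F}_3(u(s))\,ds
\end{equation*}
by a fixed point argument, where $e^{t\Delta}$ is the Dirichlet heat semigroup on $\Omega$. First I would fix, for the given $R>\frac n2(p-1)$, an exponent $r\le R$ for which Lemma~\ref{lem1}(ii) gives ${\mathcal F}_3:L_R(\Omega)\to L_r(\Omega)$ together with the locally Lipschitz estimate \eqref{estF3}; the smoothing bound $\|e^{t\Delta}v\|_R\le Ct^{-\beta}\|v\|_r$ with $\beta=\frac n2(\frac1r-\frac1R)$ then makes the Duhamel map a contraction on a small ball of $C([0,\delta];L_R(\Omega))$, the integrability condition $\beta<1$ corresponding precisely to the hypothesis $R>\frac n2(p-1)$ (this is how the relation $R=\frac{2p-1}{\frac1r+\frac2n}$ in \eqref{rR1} was engineered). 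When $R$ is too large for \eqref{rR1} I would use the variant \eqref{rR3} (for $n=3$, via Lemma~\ref{lem1}(i)) or, in the degenerate case $R=\infty$, estimate ${\mathcal F}_3:L_\infty\to L_\infty$ directly, using that $|x-y|^{-(n-2)}$ is integrable over the bounded $\Omega$. This produces the unique local mild solution with $\delta=\delta(\|u_0\|_X)>0$ (so $T_{max}(u_0)>\delta$) and $\|u(t)\|_X\le C(\|u_0\|_X)$ on $[0,\delta]$; shrinking $\delta$ below a prescribed $\eps$ at the cost of enlarging $C$ is immediate.

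Next I would establish the smoothing claim by a bootstrap. For $0<\sigma<s\le\delta$ write $u(s)=e^{(s-\sigma)\Delta}u(\sigma)+\int_\sigma^s e^{(s-\tau)\Delta}{\mathcal F}_3(u(\tau))\,d\tau$ with $\sigma=s/2$; the semigroup instantly raises the Lebesgue exponent, so after finitely many steps $u(s)\in L_\rho(\Omega)$ for arbitrarily large $\rho$, with norms bounded by $C(\|u_0\|_X)$. Feeding this into Lemma~\ref{lem1}(ii) yields ${\mathcal F}_3(u(\tau))\in L_r(\Omega)$ for every $r<\frac n2\frac p{p-1}$, and the smoothing estimate $\|e^{t\Delta}v\|_{2(1-\eps),r}\le Ct^{-(1-\eps)}\|v\|_r$, whose singularity $(s-\tau)^{-(1-\eps)}$ is integrable, gives $\|u(\delta)\|_{2(1-\eps),r}\le C(\|u_0\|_X)$. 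The embeddings $H^{2(1-\eps)}_r(\Omega)\hookrightarrow L_\infty(\Omega)$ (valid once $\eps<\frac1p$, since then $2(1-\eps)r>n$ for $r$ near $\frac n2\frac p{p-1}$) and $H^{2(1-\eps)}_r(\Omega)\hookrightarrow H^1_2(\Omega)$ then yield the stated bound on $\|u(\delta)\|_\infty+\|u(\delta)\|_{1,2}$. Iterating the bootstrap and invoking parabolic regularity shows that $u$ is classical for $t>0$.

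With $u$ smooth for $t>0$ the remaining assertions are routine. Since a direct differentiation of the double integral defining $\Phi_3$ gives $\Phi_3'={\mathcal F}_3$, I would differentiate $E(u(t))$ from \eqref{E-Choquard}, integrate by parts using $u=0$ on $\partial\Omega$, and substitute $u_t=\Delta u+{\mathcal F}_3(u)$ to obtain
\begin{equation*}
\frac{d}{dt}E(u(t))=-\int_\Omega\Delta u\,u_t\,dx-\int_\Omega{\mathcal F}_3(u)u_t\,dx=-\int_\Omega(\Delta u+{\mathcal F}_3(u))u_t\,dx=-\int_\Omega u_t^2\,dx,
\end{equation*}
the smoothness for $t>0$ justifying the differentiation under the integral and the $C^1$ regularity. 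Finally, the blow-up alternative follows from a continuation argument: since the existence time $\delta$ in the first step depends only on $\|u(t)\|_X$, a bound $\sup_{t<T_{max}(u_0)}\|u(t)\|_X<\infty$ with $T_{max}(u_0)<\infty$ would allow one to restart the solution at times close to $T_{max}(u_0)$ and continue it beyond $T_{max}(u_0)$, contradicting maximality; hence $\|u(t)\|_X\to\infty$ as $t\to T_{max}(u_0)$.

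The step I expect to be the main obstacle is the exponent bookkeeping in the regularity bootstrap: one must check that Lemma~\ref{lem1}(ii) is applicable at each stage with constants depending only on $\|u_0\|_X$, that the target range $r<\frac n2\frac p{p-1}$ together with the embedding $H^{2(1-\eps)}_r\hookrightarrow L_\infty$ is genuinely reachable, and that the whole range $R\in(\frac n2(p-1),\infty]$ (including the degenerate $L_\infty$ case) is covered uniformly in the contraction step.
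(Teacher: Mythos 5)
Your outline reproduces the paper's argument only in the ``easy'' case, and it has a genuine gap: you have the problematic range of $R$ backwards. Large $R$ (including $R=\infty$) is harmless --- on a bounded domain one replaces $R$ by $\tilde R:=\frac n2p-\eps$ via $L_R\hookrightarrow L_{\tilde R}$ and runs exactly your contraction (this is what the paper does). The case your scheme cannot handle is $R$ \emph{small}: $n=3$, $p\in[2,3)$ and $\frac n2(p-1)<R\le p$ (e.g.\ $n=3$, $p=2$, $R=1.9$), a nonempty range that is genuinely needed when Proposition~\ref{prop2} is invoked in the proof of Theorem~\ref{thmChoquard}. In that range ${\mathcal F}_3$ does not map $L_R(\Omega)$ boundedly into \emph{any} Lebesgue space: estimating $v_u=|x|^{-(n-2)}*|u|^p$ by Hardy--Littlewood--Sobolev requires $|u|^p\in L_m$ with $m>1$, i.e.\ $u\in L_{pm}$ with $pm>p\ge R$; equivalently, the constraints $m>1$ and $R\ge pm$ in \eqref{rR3a} force the source exponent of the estimate \eqref{estF3} to exceed $p$, and the second condition $\frac1r<2-\frac2n-\frac1p$ in \eqref{rR1} is precisely equivalent to $R>p$. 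So no choice of $r\le R$ makes your Duhamel map well defined on a ball of $C([0,\delta];L_R(\Omega))$, and your fallback of ``using the variant \eqref{rR3}'' does not fit your framework either: Lemma~\ref{lem1}(i) estimates ${\mathcal F}_3$ on the \emph{larger} space $L_R$ with $R>r$ that the lemma produces, not on the space containing $u_0$.

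What is missing is the Kato--Weissler-type fixed point argument the paper uses in exactly this case: one works in the time-weighted space $Y_T$ with norm $\|u\|_{Y_T}:=\sup_{0<t<T}t^\beta\|u(t)\|_{zR}$, where Lemma~\ref{lem1}(i), applied with $(r,R)$ replaced by $(R,zR)$, supplies $z>1$ such that ${\mathcal F}_3:L_{zR}\to L_R$ satisfies \eqref{estF3} and $\beta=\frac n{2R}\,\frac{z-1}z<\frac1{2p-1}$. The semigroup sends $u_0\in L_R$ into $L_{zR}$ at the cost of $t^{-\beta}$, and the Duhamel term closes because the strengthened condition $(2p-1)\beta<1$ (not merely $\beta<1$, which is all your outline uses) makes $t^\beta\int_0^t(t-s)^{-\beta}s^{-(2p-1)\beta}\,ds\le CT^{1-(2p-1)\beta}$ small. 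This strengthened smallness of $\beta$ is the whole point of Lemma~\ref{lem1}(i) and the heart of the low-$R$ case. The remainder of your proposal (smoothing bootstrap to $H^{2(1-\eps)}_r$ for $r<\frac n2\frac p{p-1}$, the embeddings into $L_\infty(\Omega)$ and $H^1_2(\Omega)$, the energy identity, and the continuation/blow-up alternative) matches the paper's proof and is fine once local existence is established on the full range $R\in(\frac n2(p-1),\infty]$.
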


\begin{proof}
We will modify the proofs of \cite[Theorem~15.2]{QS19} and some related results.

(i)
First assume either $n\ge4$ or $p\ge3$, and let $R>\frac n2(p-1)$.
Notice that these assumptions guarantee $R>p$. 
Consider $T>0$ small and let $Y_T:=C([0,T],L_R(\Omega))$.
Fix $K>0$ and assume $\|u_0\|_R\le K$.
Choose $M>K$ and let $B_M=B_{M,T}$ denote the closed ball in $Y_T$ with center 0 and radius $M$.
By $e^{-tA}$ we denote the Dirichlet heat semigroup in $\Omega$.
We will use the Banach fixed point theorem for the mapping
$\Phi_{u_0}:B_M\to B_M$, where
\begin{equation} \label{Phiu0}
 \Phi_{u_0}(u)(t):=e^{-tA}u_0+\int_0^t e^{-(t-s)A}{\mathcal F}_3(u(s))\,ds.
\end{equation}
If $R<\frac n2 p$, 
then set $\tilde R:=R$;
otherwise set $\tilde R:=\frac n2 p-\eps$, where $\eps>0$ is small.
Let  $r$ be defined by the last formula in \eqref{rR1} with $R$ replaced by $\tilde R$, 
i.e.~$\frac1r=\frac{2p-1}{\strut\tilde R}-\frac2n$.
Then \eqref{rR1} with $R$ replaced by $\tilde R$ is true
and $L_R(\Omega)\hookrightarrow L_{\tilde R}(\Omega)$, hence
${\mathcal F}_3:L_R(\Omega)\to L_r(\Omega)$ satisfies \eqref{estF3}.
In addition, $\beta:=\frac n2(\frac1r-\frac1R)<1$. 

Using $L_p$-$L_q$-estimates (see \cite[Proposition 48.4*]{QS19}) and \eqref{estF3},
for any $u,v\in B_M$ with $u(0)=u_0$ and $v(0)=v_0$ we obtain
$$\begin{aligned}
\|\Phi_{u_0}&(u)(t)-\Phi_{v_0}(v)(t)\|_R \\
  &\le \|e^{-tA}(u_0-v_0)\|_R
   +\int_0^t \|e^{-(t-s)A}
     \bigl({\mathcal F}_3(u(s))-{\mathcal F}_3(v(s))\bigr)\|_R\,ds \\
  &\le \|u_0-v_0\|_R+\int_0^t (t-s)^{-\beta}\|
     {\mathcal F}_3(u(s))-{\mathcal F}_3(v(s))\|_r\,ds \\
 &\le \|u_0-v_0\|_R+C\int_0^t (t-s)^{-\beta}
   (\|u(s)\|_R^{2(p-1)}+\|v(s)\|_R^{2(p-1)})\|u(s)-v(s)\|_R\,ds \\
 &\le \|u_0-v_0\|_R+ CM^{2(p-1)}T^{1-\beta}\|u-v\|_{Y_T},
\end{aligned}$$
hence $\Phi_{u_0}$ is a contraction if $T=T(M)$ is small enough.
In addition, choosing $v=0$ we obtain
$$ \|\Phi_{u_0}(u)(t)\|_R\leq \|u_0\|_R+ CM^{2(p-1)}T^{1-\beta}\|u\|_{Y_T}, $$
hence $\Phi_{u_0}$ maps $B_M$ into $B_M$.
Consequently, $\Phi_{u_0}$ possesses a unique fixed point in $B_M$ which corresponds
to the mild solution of \eqref{pbmChoquard}.
Now given $t\in(0,T]$ and $\eps>0$ small we have the estimate
$$ \begin{aligned}
\|u(t)\|_{2-2\eps,r} &\leq t^{-1+\eps}\|u_0\|_r+\int_0^t(t-s)^{-1+\eps}\|{\mathcal F}_3(u(s))\|_r\,ds \\
 &\leq t^{-1+\eps}\|u_0\|_r+C\int_0^t(t-s)^{-1+\eps}\|u(s)\|_R^{2p-1}\,ds \\
 &\leq t^{-1+\eps}\|u_0\|_r+CM^{2p-1}T^\eps.
\end{aligned}$$
If $\eps$ is small enough, then
$H^{2-2\eps}_r(\Omega)\hookrightarrow L_{R_1}(\Omega)$,
where $R_1=\infty$ if $R\geq\frac n2 p$ and $R_1>R$ if $R<\frac n2 p$.
Therefore
we can use a bootstrap argument to conclude that, given $\delta\in(0,T)$, $u(t)$ is bounded for $t\in[\delta,T]$ in the space
$\hat X:=H^{2-2\eps}_{\hat r}(\Omega)$ for any $\hat r<\frac n2\frac{p}{p-1}$ and $\eps>0$.
Choosing $\hat r$ close to its upper bound and $\eps$ close to 0,
$\hat X$ is embedded both in $H^1_2(\Omega)$ and $C^\kappa(\bar\Omega)$ for some $\kappa>0$.
In addition, decreasing $\kappa$ if necessary, \cite[Theorem~51.7]{QS19} implies 
$u\in C^\kappa([\delta,T],C^\kappa(\bar\Omega))$ and the Schauder estimates imply
that $u$ is a classical solution for $t>0$.
In addition, these estimates also imply that $u$ can be continued to the maximal solution
and $\lim_{t\to T_{max}(u_0)}\|u(\cdot,t)\|_R=\infty$
whenever $T_{max}(u_0)<\infty$,
and the arguments in \cite[Example 51.28]{QS19} also show the claimed properties
of the energy functional. 

(ii)
Next assume $n=3$, $p\in[2,3)$ and $R>\frac n2(p-1)$.
Lemma~\ref{lem1}
with $(r,R)$ replaced by $(R,zR)$ guarantees the existence of $z>1$ such that estimate
\eqref{estF3} with $(r,R)$ replaced by $(R,zR)$ is true and
$\beta=\frac n{2R}\frac{z-1}z<\frac1{2p-1}$.
Now consider the Banach space
$$Y_T:=\{u\in L_{\infty,loc}\bigl((0,T),L_{zR}(\Omega)\bigr): \|u\|_{Y_T}<\infty\},
  \qquad \|u\|_{Y_T}:=\sup_{0<t<T}t^\beta\|u(t)\|_{zR}$$
and assume $\|u_0\|_R\leq K$.
We will again use the Banach fixed point theorem for the mapping $\Phi_{u_0}:B_M\to B_M$ with $M>K$.
Assume $u,v\in B_M$ and $v_0\in L_R(\Omega)$. Then
$$\begin{aligned}
  t^\beta &\|\Phi_{u_0}(u)(t)-\Phi_{v_0}(v)(t)\|_{zR} \\
  &\leq t^\beta\|e^{-tA}(u_0-v_0)\|_{zR}
   +t^\beta\int_0^t \|e^{-(t-s)A}
     \bigl({\mathcal F}_3(u(s))-{\mathcal F}_3(v(s))\bigr)\|_{zR}\,ds \\
  &\leq \|u_0-v_0\|_{R}
   +t^\beta\int_0^t (t-s)^{-\beta}
   \|{\mathcal F}_3(u(s))-{\mathcal F}_3(v(s))\|_{R}\,ds \\
  &\leq \|u_0-v_0\|_{R}
   +t^\beta\int_0^t (t-s)^{-\beta}
    \bigl(\|u(s)\|_{zR}^{2(p-1)}+\|v(s)\|_{zR}^{2(p-1)}\bigr)\|u(s)-v(s)\|_{zR}\,ds \\
  &\leq \|u_0-v_0\|_{R}
   +CM^{2(p-1)}t^\beta\int_0^t (t-s)^{-\beta}s^{-(2p-1)\beta}
     \|u-v\|_{Y_T}\,ds \\
 &\leq \|u_0-v_0\|_{R}
   +CM^{2(p-1)}T^{1-\beta(2p-1)}\|u-v\|_{Y_T}.
\end{aligned} $$
In particular, choosing $v_0=0$ and $v=0$ in 
we have
$$
 \|\Phi_{u_0}(u)\|_{Y_T}
\leq \|u_0\|_R+CM^{2(p-1)}T^{1-(2p-1)\beta}\|u\|_{Y_T}
$$
and we conclude as above.
\end{proof}

\begin{remark} \label{rem-WP} \rm
(i) Condition $R>\frac n2(p-1)$ in Proposition~\ref{prop2} is the same
as the optimal condition for the well-posedness in $L_R(\Omega)$ in the case
of the nonlinearity ${\mathcal F}(u)=|u|^{p-1}u$ (see \cite{QS19}).
In the proof of Theorem~\ref{thmChoquard} we prove estimates of $\|u(t)\|_R$
with $R<p+1$. Since $p+1>\frac n2(p-1)$ if $p<p_S$, such
estimates and our well-posedness result will guarantee \eqref{AE}.
The well-posedness of problem \eqref{pbmChoquard} 
in $L_R$-spaces was also studied in \cite{LM14} 
but this paper requires stronger condition
$R>\frac n2(p-1)(2-\frac1p)$ and $R\ge n-1$.

(ii) Similar estimates as in the proof of Proposition~\ref{prop2} show that
the solution depends continuously on the initial data.
More precisely, let $u$ and $v$ be solutions of \eqref{pbmChoquard}
with initial data $u_0$ and $v_0$, respectively.
Then there exists $\delta=\delta(\|u_0\|_X,\|v_0\|_X)>0$ such that  
$\|u(t)-v(t)\|_X\leq2\|u_0-v_0\|_X$ for $t\in[0,\delta]$.
In addition, if $T<\min(T_{max}(u_0),T_{max}(v_0))$, 
$\|u(t)\|_X+\|v(t)\|_X\leq M$ for $t\in[0,T]$, and $\delta\in(0,T)$, then there exists
$C=C(M,\delta,T)$ such that
$\|u(t)-v(t)\|_\infty+\|u(t)-v(t)\|_{1,2}\leq C\|u_0-v_0\|_X$ for
$t\in[\delta,T]$. In particular,
 $|E(u(t))-E(v(t))|\leq C\|u_0-v_0\|_X$ for $t\in[\delta,T]$.
See \cite[Step~3 in the proof of Theorem~15.2 and Theorem~51.7]{QS19}
for related results.

(iii) The function $u\equiv0$ is an asymptotically stable
steady state of \eqref{pbmChoquard}. Consequently,
its domain of attraction 
$${\mathcal D}_0:=\{u_0\in X:T_{max}(u_0)=\infty
\ \hbox{ and }\ u(t;u_0)\to 0\hbox{ as }t\to\infty\}$$
is an open nonempty set.
If $u_0\in\partial{\mathcal D}_0$, then (ii) implies $E(u(t))\geq0$ for
$t\in[0,T_{max}(u_0))$.
\qed
\end{remark}

\begin{proposition} \label{prop3}
Let $n=3$, $p=2$, $q\in(2,5)$ and $R>\frac32(q-1)\max(1,\frac3q)$. Let $u_0\in X:=L_R(\Omega)$
and let $E$ be defined by \eqref{E-SPS}.
Then there exists a unique local mild solution of \eqref{pbmSPS}. 
This solution is classical for $t>0$ and the function $(0,T)\to\R:t\mapsto E(u(\cdot,t))$ is $C^1$
with $\frac{d}{dt}E(u(\cdot,t))=-\int_\Omega u_t^2(x,t)\,dx$. 
In addition, there exist $\delta=\delta(\|u_0\|_X)>0$ and $C=C(\|u_0\|_X)$ such that
$T_{max}(u_0)>\delta$, $\|u(\cdot,t)\|_X\leq C$ for $t\in[0,\delta]$ and 
$\|u(\cdot,\delta)\|_\infty+\|u(\cdot,\delta)\|_{1,2}\leq C$.
Given $\eps>0$, we can choose $\delta\in(0,\eps)$, but then the constant $C$ also depends on $\eps$.
Finally, if $T_{max}(u_0)<\infty$, then $\lim_{t\to T_{max}(u_0)}\|u(\cdot,t)\|_X=\infty$.
\end{proposition}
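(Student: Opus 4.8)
The plan is to follow the scheme of Proposition~\ref{prop2} almost verbatim, since problem \eqref{pbmSPS} differs from \eqref{pbmChoquard} only by the linear zeroth-order term $u$ on the left-hand side and by the form of the nonlinearity. Writing ${\mathcal A}=-\Delta+I=A+I$, the associated semigroup factors as $e^{-t{\mathcal A}}=e^{-t}e^{-tA}$, where $e^{-tA}$ is the Dirichlet heat semigroup; since $e^{-t}\le1$ for $t\ge0$, all the $L_p$-$L_q$-estimates of \cite[Proposition~48.4*]{QS19} used in Proposition~\ref{prop2} remain valid with the same constants. I would run the Banach fixed point argument for the Duhamel map
\begin{equation*}
 \Phi_{u_0}(u)(t):=e^{-t{\mathcal A}}u_0+\int_0^t e^{-(t-s){\mathcal A}}\bigl(|u(s)|^{q-1}u(s)-\lambda{\mathcal F}_3(u(s))\bigr)\,ds,
\end{equation*}
splitting the nonlinearity into its power part $|u|^{q-1}u$ and its Choquard part $-\lambda{\mathcal F}_3(u)$; here $n=3$, $p=2$, so ${\mathcal F}_3(u)(x)=(\int_\Omega|u(y)|^2|x-y|^{-1}\,dy)\,u(x)$.

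The two local Lipschitz estimates that feed the contraction are obtained separately. For the Choquard part I would invoke Lemma~\ref{lem1} with $p=2$, $n=3$ to get \eqref{estF3}, i.e.\ $\|{\mathcal F}_3(u)-{\mathcal F}_3(v)\|_r\le C(\|u\|_R^2+\|v\|_R^2)\|u-v\|_R$ for an admissible pair $(r,R)$; note that the cubic homogeneity $2p-1=3$ is what forces the weighted-space exponent to satisfy $\beta<\frac1{2p-1}=\frac13$, exactly as in Proposition~\ref{prop2}(ii). For the power part I would use Hölder's inequality directly,
\begin{equation*}
 \bigl\||u|^{q-1}u-|v|^{q-1}v\bigr\|_r\le C\bigl(\|u\|_R^{q-1}+\|v\|_R^{q-1}\bigr)\|u-v\|_R,
\end{equation*}
valid whenever the target exponent satisfies $\frac1r\ge\frac qR$, the natural choice being $r=R/q$; the resulting smoothing exponent $\frac n2(q-1)/R=\frac{3(q-1)}{2R}$ stays below $1$ precisely when $R>\frac32(q-1)$.

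Because $n=3$ and $p=2$, I would work, following Proposition~\ref{prop2}(ii), in the weighted space $Y_T$ with norm $\sup_{0<t<T}t^\beta\|u(t)\|_{zR}$ rather than in $C([0,T],L_R)$. The main obstacle is the exponent bookkeeping: one must choose the lifted integrability $zR$, the weight $\beta$ and a common intermediate exponent $r$ so that \emph{both} Duhamel convolutions — the one of homogeneity $q$ from the power term and the one of homogeneity $3$ from ${\mathcal F}_3$ — have integrable singularities on $(0,t)$ and produce a positive power of $T$, hence a contraction for small $T$. The power term imposes $R>\frac32(q-1)$, while the Choquard term, whose effective scaling is that of a quadratic nonlinearity, imposes its own lower bound through the condition $r>\frac n2(p-1)=\frac32$ in Lemma~\ref{lem1}(i). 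Reconciling these two requirements over the whole range $q\in(2,5)$ is exactly what the hypothesis $R>\frac32(q-1)\max(1,\frac3q)$ encodes: for $q\ge3$ the power term dominates and $\max(1,\frac3q)=1$, whereas for $q\in(2,3)$ the cubic Choquard term is the higher-order one and the extra factor $\frac3q$ is needed. I expect this balancing of the two competing homogeneities to be the only genuinely new point relative to Proposition~\ref{prop2}.

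Once the mild solution is constructed, the remaining assertions follow as in Proposition~\ref{prop2}. The fixed point in a ball of $Y_T$ already yields $\delta=\delta(\|u_0\|_X)$ with $T_{max}(u_0)>\delta$ and $\|u(t)\|_X\le C$ on $[0,\delta]$. A smoothing estimate of the form
\begin{equation*}
 \|u(t)\|_{2-2\eps,r}\le t^{-1+\eps}\|u_0\|_r+C\int_0^t(t-s)^{-1+\eps}\bigl\||u(s)|^{q-1}u(s)-\lambda{\mathcal F}_3(u(s))\bigr\|_r\,ds
\end{equation*}
together with a bootstrap then shows that $u(\delta)$ is bounded in a space embedding into $H^1_2(\Omega)\cap C^\kappa(\bar\Omega)$, whence $\|u(\delta)\|_\infty+\|u(\delta)\|_{1,2}\le C(\|u_0\|_X)$; rescaling the estimates lets one take $\delta\in(0,\eps)$ at the cost of letting $C$ also depend on $\eps$. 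Applying \cite[Theorem~51.7]{QS19} and the Schauder estimates gives a classical solution for $t>0$, and the standard continuation argument yields the maximal solution and the blow-up alternative $\lim_{t\to T_{max}(u_0)}\|u(t)\|_X=\infty$ when $T_{max}(u_0)<\infty$. Finally, the variational structure of \eqref{pbmSPS} gives, as in \cite[Example~51.28]{QS19}, that $t\mapsto E(u(t))$ is $C^1$ with $\frac{d}{dt}E(u(t))=-\int_\Omega u_t^2\,dx$.
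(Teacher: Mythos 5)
Your overall scheme coincides with the paper's: the paper also proves this proposition by rerunning Case (ii) of the proof of Proposition~\ref{prop2} in the weighted space $Y_T$, using Lemma~\ref{lem1} for the Choquard part, H\"older for the power part, and the same bootstrap/regularity/continuation arguments for the remaining assertions (for $q\ge3$ it simply says the proof is a modification of the $\lambda=0$ case of \cite[Theorem~15.2]{QS19}). The peripheral steps of your proposal --- the factorization $e^{-t{\mathcal A}}=e^{-t}e^{-tA}$, the two Lipschitz estimates, the smoothing/bootstrap for $\|u(\delta)\|_\infty+\|u(\delta)\|_{1,2}$, the energy identity, the blow-up alternative --- are all correct.

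The gap is that the step you defer, the ``exponent bookkeeping'', is the entire mathematical content of the proof, and the mechanism you offer for it is not correct. Reconciling the two constraints you actually derive, namely $R>\frac32(q-1)$ (power term) and $R>\frac32$ (Lemma~\ref{lem1} applied to ${\mathcal F}_3$), would yield the condition $R>\frac32(q-1)$ for all $q\in(2,5)$, not $R>\frac32(q-1)\max(1,\frac3q)$; so the stated hypothesis cannot be ``exactly'' that reconciliation, and your account of where the factor $\frac3q$ comes from does not survive inspection. What the paper actually does for $q\in(2,3)$ is a one-line device you are missing: apply Lemma~\ref{lem1} with $(r,R)$ replaced by $(R,zR)$; the constraint $\beta=\frac3{2R}\frac{z-1}{z}<\frac13$ forces $\frac1z>1-\frac{2R}9$, and the hypothesis $R>\frac92\frac{q-1}q$ is precisely what makes this compatible with the additional requirement $z\ge q$. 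Once $z\ge q$, the power term also maps $L_{zR}$ into $L_R$, with $\||u|^{q-1}u-|v|^{q-1}v\|_R\le C(\|u\|_{zR}^{q-1}+\|v\|_{zR}^{q-1})\|u-v\|_{zR}$, and since $q-1<2$ these factors are absorbed into the Choquard ones via $\|u\|_{zR}^{q-1}\le C(1+\|u\|_{zR}^{2})$; this gives a \emph{single} Lipschitz estimate for the full nonlinearity from $L_{zR}$ to $L_R$, after which the proof of Proposition~\ref{prop2}(ii) applies verbatim. Your variant with two \emph{different} intermediate exponents can in fact be completed --- for the power term with target $L_{zR/q}$ and smoothing exponent $\gamma=\frac32\frac{q-1}{zR}$ the total power of $t$ telescopes, $1+\beta-\gamma-q\beta=1-\frac{3(q-1)}{2R}>0$ --- but this computation, together with the constraint $q\beta<1$ (which for $q>3$ is strictly stronger than the bound $\beta<\frac13$ that Lemma~\ref{lem1} hands you, so the admissible range of $z$ has to be re-examined there), is exactly what must be written down and is absent from your proposal.
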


\begin{proof}
First assume $q\in(2,3)$.
We will proceed as in Case (ii) of the proof of Proposition~\ref{prop2}.
Notice that if we use Lemma~\ref{lem1} with $(r,R)$ replaced by $(R,zR)$,
then we may assume $z\geq q$, due to our assumption $R>\frac92\frac{q-1}q$.
Hence we have 
$$\begin{aligned}
\|{\mathcal F}_4(u)-{\mathcal F}_4(v)\|_R 
&\le C(\|u\|_{zR}^{2}+\|v\|_{zR}^{2}+\|u\|_{zR}^{q-1}+\|v\|_{zR}^{q-1})\|u-v\|_{zR} \\
&\le C(\|u\|_{zR}^{2}+\|v\|_{zR}^{2}+1)\|u-v\|_{zR},
\end{aligned}
$$
and the same arguments as in the proof of Proposition~\ref{prop2}
show the well-posedness in $L_R(\Omega)$
and also the remaining assertions in Proposition~\ref{prop3}.

If $q\ge3$, then the proof is a straightforward modification of the proof in
the case $\lambda=0$ in \cite{QS19}.
\end{proof}

\begin{remark} \label{rem-WP2} \rm
(i) 
The well-posedness in $L_R$-spaces for a modification of problem \eqref{pbmSPS}
(including the nonlinearity ${\mathcal F}_4$ with general $n,p$ and $q=2p-1<p_S$) 
was also studied in \cite{LM15}; 
this paper requires $R>\frac n2\max(q-1,1)$, hence $R>3$ if $n=3$, $p=2$ and $q=2p-1=3$.

(ii) The assertions in Remark~\ref{rem-WP}(ii) and (iii) remain true for
problem \eqref{pbmSPS}.
\qed
\end{remark}

\begin{proposition} \label{prop4}
Consider problem \eqref{fracLap} with $\alpha\in(0,1)$.
Let $u_0\in X:=L_q(\Omega)$, where $q>\max(1,\frac{n}{2\alpha}(p-1))$,
and let $E$ be defined by \eqref{E-frac}.
Then there exists a unique local mild solution of \eqref{fracLap}. 
This solution satisfies the equation in \eqref{fracLap} a.e.~and the function $(0,T)\to\R:t\mapsto E(u(\cdot,t))$ is 
locally Lipschitz continuous 
with $\frac{d}{dt}E(u(\cdot,t))=-\int_\Omega u_t^2(x,t)\,dx$ a.e. 
In addition, there exist $\delta=\delta(\|u_0\|_X)>0$ and $C=C(\|u_0\|_X)$ such that
$T_{max}(u_0)>\delta$, $\|u(\cdot,t)\|_X\leq C$ for $t\in[0,\delta]$ and 
$\|u(\cdot,\delta)\|_{\alpha,2}+\|u(\cdot,\delta)\|_\infty\leq C$.
Finally, if $T_{max}(u_0)<\infty$, then $\lim_{t\to T_{max}(u_0)}\|u(\cdot,t)\|_X=\infty$.
\end{proposition}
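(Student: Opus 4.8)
The plan is to prove Proposition~\ref{prop4} by the same fixed-point and bootstrap strategy used in the proofs of Propositions~\ref{prop2} and~\ref{prop3}, adapted to the fractional operator $(-\Delta)^\alpha$ with Dirichlet exterior conditions. First I would record the two ingredients that replace their local analogues: the fractional Dirichlet heat semigroup $e^{-tA}$ with $A=(-\Delta)^\alpha$ satisfies $L_r$-$L_s$ smoothing estimates of the form $\|e^{-tA}v\|_s\le Ct^{-\frac n{2\alpha}(\frac1r-\frac1s)}\|v\|_r$ and $\|e^{-tA}v\|_{\sigma,s}\le Ct^{-\frac{\sigma}{2\alpha}-\frac n{2\alpha}(\frac1r-\frac1s)}\|v\|_r$, which follow from the maximal-regularity and spectral theory cited earlier (see \cite{G15,AG24}); and the pure-power nonlinearity ${\mathcal F}(u)=|u|^{p-1}u$ is locally Lipschitz from $L_{pR}(\Omega)$ to $L_R(\Omega)$, with $\||u|^{p-1}u-|v|^{p-1}v\|_R\le C(\|u\|_{pR}^{p-1}+\|v\|_{pR}^{p-1})\|u-v\|_{pR}$. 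These are strictly simpler than the Choquard estimate \eqref{estF3}, so the nonlinear analysis is routine.

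Next I would set up the contraction. Since $q>\frac n{2\alpha}(p-1)$, one can choose $z>1$ so that $zq$ controls the nonlinearity with an integrable singularity: with $\beta:=\frac n{2\alpha q}\,\frac{z-1}z$ one checks $\beta<\frac1p$ exactly as in Case~(ii) of Proposition~\ref{prop2}, the condition $q>\frac n{2\alpha}(p-1)$ being what makes this possible. Working in the weighted space
\begin{equation} \label{YTfrac}
Y_T:=\{u\in L_{\infty,loc}((0,T),L_{zq}(\Omega)):\|u\|_{Y_T}<\infty\},\qquad \|u\|_{Y_T}:=\sup_{0<t<T}t^\beta\|u(t)\|_{zq},
\end{equation}
and defining $\Phi_{u_0}(u)(t):=e^{-tA}u_0+\int_0^t e^{-(t-s)A}|u(s)|^{p-1}u(s)\,ds$, the same chain of Hölder, smoothing and Beta-function estimates as before yields
$$\|\Phi_{u_0}(u)-\Phi_{v_0}(v)\|_{Y_T}\le\|u_0-v_0\|_q+CM^{p-1}T^{1-p\beta}\|u-v\|_{Y_T},$$
so $\Phi_{u_0}$ is a contraction on a ball $B_M$ for $T=T(M)$ small. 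This gives the unique local mild solution and, via $\lim_{t\to T_{max}}\|u(t)\|_q=\infty$, continuation to the maximal solution, together with the short-time bound $\|u(t)\|_X\le C$ on $[0,\delta]$.

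For the smoothing statement $\|u(\delta)\|_{\alpha,2}+\|u(\delta)\|_\infty\le C$, I would run a bootstrap: the Duhamel formula combined with the $H^\sigma_s$-smoothing estimate lets one gain integrability and fractional derivatives in finitely many steps, each step raising the exponent by a fixed amount until one reaches $L_\infty$ and then $H^\alpha_2$. The subcriticality $p<p_S(\alpha)$ is what guarantees the gain per step is positive and the iteration terminates, just as $p<p_S$ does in the classical case. The main obstacle, and the only genuinely new point compared with Propositions~\ref{prop2}--\ref{prop3}, is the regularity and differentiability of the energy \eqref{E-frac}: because the domain of $(-\Delta)^\alpha$ under Dirichlet conditions is \emph{not} the naive analogue $H^{2\alpha}_2\cap(\text{boundary conditions})$ — solutions exhibit boundary behaviour like $d(x)^\alpha$ rather than smoothness up to $\partial\Omega$ — one cannot simply quote the classical Schauder/parabolic theory to get $t\mapsto E(u(t))\in C^1$. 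Instead I expect only the weaker, but sufficient, conclusion that $u$ solves the equation almost everywhere and that $t\mapsto E(u(t))$ is locally Lipschitz with $\frac{d}{dt}E(u(t))=-\int_\Omega u_t^2\,dx$ a.e.; this I would obtain from the maximal $L_p$-regularity of $(-\Delta)^\alpha$ (so that $u_t,(-\Delta)^\alpha u\in L_p$ locally in time) together with the identity $\frac12\frac{d}{dt}\|(-\Delta)^{\alpha/2}u\|_2^2=\langle(-\Delta)^\alpha u,u_t\rangle$ and the chain rule for the Nemytskii term, testing the equation with $u_t$. The remark preceding the proposition already flags that the non-standard domain "does not have a strong influence on the proof," which is precisely why the weaker a.e./Lipschitz formulation is both necessary and enough for the subsequent energy arguments.
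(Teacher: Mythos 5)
Your setup for local existence, continuation and smoothing is essentially the paper's own route: the paper also reduces to the fixed-point argument of \cite[Theorem~15.2]{QS19} in a weighted-in-time space (it is enough to replace the exponent $\frac{n(p-1)}{2pq}$ there by $\frac{n(p-1)}{2\alpha pq}$, which is your $p\beta$ computation with $z=p$), and it obtains the $H^\alpha_2$ and $L_\infty$ bounds at $t=\delta$ by the same bootstrap, using the semigroup estimate $\|e^{-tA}u\|_{X_\theta}\le Ct^{-\theta}\|u\|_q$ together with the embeddings \eqref{embHalpha} of the interpolation spaces of the transmission space $D_q=H^{\alpha,(2\alpha)}_q(\overline\Omega)$. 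So that part of your outline is sound and matches the paper.

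The gap is in the energy statement. You claim that \emph{both} the local Lipschitz continuity of $t\mapsto E(u(t))$ and the identity $\frac{d}{dt}E(u(t))=-\int_\Omega u_t^2\,dx$ a.e.\ follow from maximal $L_p$-regularity (so $u_t,(-\Delta)^\alpha u\in L_p$ locally in time) by testing the equation with $u_t$. Testing with $u_t$ gives, at best, the integrated identity $E(u(t_2))-E(u(t_1))=-\int_{t_1}^{t_2}\|u_t\|_2^2\,dt$, hence absolute continuity of $E$, and then H\"older's inequality only yields $|E(u(t_2))-E(u(t_1))|\le\|u_t\|_{L_p((t_1,t_2),L_2)}^2\,|t_2-t_1|^{1-2/p}$, i.e.\ H\"older continuity of exponent $1-2/p<1$, \emph{not} Lipschitz continuity. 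Local Lipschitz continuity requires $\|u_t(t)\|_2$ to be locally bounded in time, which maximal $L_p$-regularity with finite $p$ does not provide (and maximal $L_\infty$-regularity is not available for analytic semigroups). The paper closes precisely this hole with a separate argument that your proposal is missing: fix $t_0$ with $u(\cdot,t_0)\in D_q$ (possible since $u\in L_p(J,D_q)$), use $\frac1h(e^{-hA}v-v)\to-Av$ for $v\in D_q$ to get $\|u(t)-u(t_0)\|_q\le C(t-t_0)$ for $t>t_0$ near $t_0$, propagate this by the variation-of-constants formula and Gronwall's inequality to $\|u(t+\tau)-u(t_0+\tau)\|_q\le C(t-t_0)$ for $\tau\in(0,1]$, and then bootstrap to local Lipschitz continuity of $u$ with values in interpolation spaces $\tilde X_\theta$ that embed into $H^\alpha_2(\Omega)$ by \eqref{embHalpha}; this gives the Lipschitz continuity of $E(u(\cdot))$, and only afterwards does Rademacher's theorem, combined with \eqref{MRfrac}, identify the a.e.\ derivative as $-\int_\Omega u_t^2\,dx$. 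You would need to add this (or an equivalent) argument to prove the proposition as stated.
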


\begin{proof}
Let $A=A_q$ denote the Dirichlet fractional Laplacian $(-\Delta)^\alpha_D$ 
considered as an unbounded operator in $L_q(\Omega)$, where $q\in(1,\infty)$.
Then $A$ generates an analytic semigroup in $L_q(\Omega)$,
the domain of $A$ is the transmission space $D_q:=H^{\alpha,(2\alpha)}_q(\overline\Omega)$
and, given $J:=(0,T)$ with $T<\infty$ and $p\in(1,\infty)$, 
$$ ({\mathbb E}_0,{\mathbb E}_1):=\bigl(L_p(J,L_q(\Omega)),L_p(J,D_q)\cap W^1_p(J,L_q(\Omega))\bigr) $$
is a pair of maximal regularity for $A$, see \cite{G15,AG24}.
In particular,
if $f\in {\mathbb E}_0$ and $u_0$ belongs to the real interpolation space $X_{p,q}:=(L_q(\Omega),D_q)_{1-1/p,p}$,
then the solution of the problem
$$ 
\begin{aligned} u_t+(-\Delta)^\alpha u&=f \ &&\hbox{ in }\Omega\times(0,T), \\
                 u&= 0 \ &&\hbox{ in }(\R^n\setminus\Omega)\times(0,T), \\
                u(\cdot,0)&= u_0 &&\hbox{ in }\Omega
  \end{aligned} 
$$
satisfies
\begin{equation} \label{MRfrac}
 \|u\|_{{\mathbb E}_1} \leq C\cdot(\|f\|_{{\mathbb E}_0}+\|u_0\|_{X_{p,q}}). 
\end{equation}
Notice that estimate \eqref{MRfrac} is also true if $\alpha=1$ and $D_q=\{u\in H^2_q(\Omega):u=0\hbox{ on }\partial\Omega\}$.
Assume $2\alpha-\frac nq>-\frac nR$. Then there exist $r>q$ and $s>0$ such that
\begin{equation} \label{rs1}
 \alpha+s+\frac nR>\frac nr>\frac nq+s-\alpha\quad\hbox{and}\quad s<\min(\frac1r,\alpha).
\end{equation}
These inequalities and the definition of $H^{\alpha,(2\alpha)}_q(\overline\Omega)$ imply 
$H^{\alpha}_q(\Omega)\hookrightarrow H^s_r(\Omega)$ and
$$H^{\alpha,(2\alpha)}_q(\overline\Omega)\hookrightarrow H^{\alpha+s}_r(\Omega)\hookrightarrow L_R(\Omega).$$
Similarly, if $q>\frac{2n}{n+2\alpha}$, then we can find $r>q$ and $s>0$ such that
\begin{equation} \label{rs2}
 s+\frac n2>\frac nr>\frac nq+s-\alpha\quad\hbox{and}\quad s<\min(\frac1r,\alpha),
\end{equation}
hence 
$$ H^{\alpha,(2\alpha)}_q(\overline\Omega)\hookrightarrow H^{\alpha+s}_r(\Omega)
  \hookrightarrow H^\alpha_2(\Omega).$$
Next consider the interpolation space 
$X_\theta:=(L_q(\Omega),H^{\alpha,(2\alpha)}_q(\overline\Omega))_\theta$,
where $(\cdot,\cdot)_\theta$ is an interpolation functor of exponent $\theta$.
Similarly as above, if $2\alpha-\frac nq>-\frac nR$ or $q>\frac{2n}{n+2\alpha}$,
then there exists $\eps=\eps(n,\alpha,q,R)>0$ such that 
\begin{equation} \label{embHalpha}
X_\theta\hookrightarrow L_R(\Omega)\ \hbox{ or }\ X_\theta\hookrightarrow H^\alpha_2(\Omega),
\end{equation} 
respectively, provided $\theta\in(1-\eps,1)$.
In fact, the emeddings above and the interpolation results \cite[Theorems~VII.2.7.2 and VII.2.8.3]{A19}
imply
$$ X_\theta \hookrightarrow (L_q(\Omega),H^{\alpha+s}_r(\Omega))_\theta
 \hookrightarrow H^{\alpha+\tilde s}_{\tilde r}(\Omega) $$
with $\tilde s=s-\eps(s+\alpha)$ and $\tilde r=r/(1+\eps r/q)$,
and \eqref{rs1}, \eqref{rs2} remain true if we replace $r,s$ with $\tilde r,\tilde s$
provided $\eps$ is small enough.

Since the semigroup $e^{-tA}$ satisfies
$\|e^{-tA}u\|_{X_\theta}\leq Ct^{-\theta}\|u\|_q$,
the proof of local existence for problem \eqref{fracLap} 
and the estimates of the solution $u(\cdot,t)$ in $X=L_q(\Omega)$ with $q>\frac{n}{2\alpha}(p-1)$
are the same as in the proof of \cite[Theorem~15.2]{QS19}
which deals with the case $\alpha=1$:
It is sufficient to replace $\alpha:=\frac{n(p-1)}{2pq}$ in that proof 
with $\beta:=\frac{n(p-1)}{2\alpha pq}$.
The estimates in $H^\alpha_2(\Omega)$ and $L^\infty(\Omega)$ follow by the same bootstrap arguments as in \cite{QS19}.
In fact, these arguments also imply
\begin{equation} \label{estXpq}
\|u(\cdot,\delta)\|_{X_{p,q}}\leq C \quad\hbox{for any }\ p,q>1.
\end{equation}
If $T_{max}(u_0)<\infty$, then the property $\lim_{t\to T_{max}(u_0)}\|u(\cdot,t)\|_X=\infty$
follows from the well-posedness of the problem in $X$.
The solution $u$ satifies the equation in \eqref{fracLap} a.e.~due to \eqref{MRfrac}. 
If we fix $t_0>0$ such that $u(\cdot,t_0)\in X_1=D_q$, then $\|u(t)-u(t_0)\|_q\leq C(t-t_0)$ for $t>t_0$ close to $t_0$
since $\frac1t(e^{-tA}u_0-u_0)\to -Au_0$ in $X$.
The variation-of-constants formula and
Gronwall's inequality then imply $\|u(t+\tau)-u(t_0+\tau)\|_q\leq C(t-t_0)$ for $\tau\in(0,1]$,
and standard bootstrap arguments imply the local Lipschitz continuity of $u:(0,T)\to \tilde X_\theta$ for any $\theta<1$,
where $\tilde X=L_{\tilde q}(\Omega)$, $\tilde q\geq q$.
This fact and \eqref{embHalpha} guarantee the local Lipschitz continuity of $t\mapsto E(u(\cdot,t))$.
The a.e. differentiability of this function and estimate \eqref{MRfrac} guarantee
$\frac{d}{dt}E(u(\cdot,t))=-\int_\Omega u_t^2(x,t)\,dx$ a.e.
\end{proof}

\subsection{Proofs of Theorem~\ref{thm23} and Corollary~\ref{cor23}}

By $c,C$ we denote positive constants which may vary from line to line.

\begin{proof}[Proof of Theorem~\ref{thm23}]
The smoothing estimates in Proposition~\ref{prop3} guarantee that 
--- in order to estimate $\|u(t)\|_X$ --- 
it is sufficient to estimate $\|u(t)\|_r$ for some $r>r^*$,
where $r^*=\frac32(q-1)\max(1,\frac3q)<6$.
Consequently, we may assume $R<6$. In particular,
$H^{1,2}(\Omega)\hookrightarrow L_R(\Omega)=X$ and this embedding is compact.

{\it Step 1.}\/ Assume $q<3$. We will show that if
$\tau\in(0,\infty)$, $\tau\leq T_{max}(u_0)$, and
\begin{equation} \label{Etau}
E(u(t))\geq C_0\ \hbox{ for }\ t\in(0,\tau),
\end{equation}
then 
\begin{equation} \label{AEtau}
\|u(t)\|_X\leq C=C(\|u_0\|_X,C_0,\tau)\ \hbox{ for }\ t\in(0,\tau).
\end{equation}
Proposition~\ref{prop3} implies the existence of $\delta\in(0,\tau)$
and $C_1=C_1(\|u_0\|_X,\tau)$ 
such that 
\begin{equation} \label{estdelta}
 \|u(t)\|_X\leq C(\|u_0\|_X)\ \hbox{ for }\ t\in[0,2\delta]
\end{equation}
and
\begin{equation} \label{T1}
\|u(\delta)\|_\infty+\|u(\delta)\|_{1,2}+E(u(\delta))\leq C_1.
\end{equation}
Since 
$$ \frac{d}{dt}E(u(t))=-\int_\Omega u_t^2(x,t)\,dx, $$
we have 
\begin{equation} \label{estut}
\int_{\delta}^{\tau}\|u_t\|_2^2\,dt\le C_1-C_0,
\end{equation} 
hence $\|u(t)\|_2\leq C$ on $[\delta,\tau)$,
where $C=C(C_1,C_0,\tau)=C(\|u_0\|_X,C_0,\tau)$.
Set $\theta:=\frac{q+1}{6(q-1)}\in(0,1)$ and consider $t\in[\delta,\tau)$. Then $4\theta<q+1$, hence
the Hardy-Littlewood-Sobolev inequality, interpolation, the estimate of $\|u(t)\|_2$ above
and the H\"older inequality imply
\begin{equation} \label{IHLS}
 \begin{aligned}
I(t)&:=\int_\Omega\int_\Omega\frac{u^2(x,t)u^2(y,t)}{|x-y|}\,dy\,dx
 \leq C\|u(t)\|_{12/5}^4 \\
 &\leq C\|u(t)\|_2^{4(1-\theta)}\|u(t)\|_{q+1}^{4\theta}
 \leq C\|u(t)\|_{q+1}^{4\theta}
 \leq \eps\|u(t)\|_{q+1}^{q+1}+C_\eps.
\end{aligned}
\end{equation}
Fix $\eps<\frac{2(q-1)}{\lambda(q+1)}$.
Multiplying the equation in \eqref{pbmSPS} with $u$, integrating over $\Omega$ 
and using \eqref{IHLS} we obtain
\begin{equation}\label{multu}
\begin{aligned} 
\int_\Omega u(t)u_t(t)\,dx
  &= -\|u(t)\|_{1,2}^2+\|u(t)\|_{q+1}^{q+1}-\lambda I(t) \\
 &= -2E(u(t))+\frac{q-1}{q+1}\|u(t)\|_{q+1}^{q+1}-\frac\lambda 2 I(t) \\
 &\geq -2C_1+\Bigl(\frac{q-1}{q+1}- \frac{\lambda\eps}2\Bigr)\|u(t)\|_{q+1}^{q+1} 
 -\frac\lambda2 C_\eps \geq c\|u(t)\|_{q+1}^{q+1}-C.
\end{aligned}
\end{equation}
Consequently,
$$\int_{\delta}^{\tau}\|u(t)\|_{q+1}^{2(q+1)}\,dt
 \leq C\Bigl(1+\Bigl(\int_{\delta}^{\tau}\int_\Omega u^2(x,t)\,dx\,dt\Bigr)
   \Bigl(\int_{\delta}^{\tau}\int_\Omega u_t^2(x,t)\,dx\,dt\Bigr)\Bigr) \leq C.$$
Interpolation between this estimate and \eqref{estut} (see \cite{CL84,Q99}) yields
$\|u(t)\|_r\leq C$ for any $t\in[\delta,\tau)$ and $r<\frac{2q+4}3$.
Since $\frac{2q+4}3>\frac92\frac{q-1}q$, Proposition~\ref{prop3} and \eqref{estdelta} imply
\eqref{AEtau}.
Notice that this estimate also implies
\begin{equation} \label{AEinfty}
\lim_{t\to T_{max}(u_0)}E(u(t))>-\infty \quad\Rightarrow\quad T_{max}(u_0)=\infty.
\end{equation}

{\it Step 2.}\/
Assume $q\le3$.
If $t\in(0,T_{max}(u_0))$, then we will prove estimate \eqref{estEut} below.
Without loss of generality we may assume that $\Omega$ is starshaped with respect to zero.
Multiplying the equation in \eqref{pbmSPS} with $x\cdot\nabla u$ and integrating by parts 
(cf.~\cite{CSS10} and \cite[Theorem 5.1]{QS19})
we obtain the following Pohozaev type identity
$$ 
\begin{aligned}
\frac12\|u(t)\|_{1,2}^2+\|u(t)\|_2^2&-\frac3{q+1}\|u(t)\|_{q+1}^{q+1}+\frac54\lambda I(t)  \\
 &=-\int_{\partial\Omega}\Big|\frac{\partial u}{\partial\nu}\Big|^2x\cdot\nu\,dS+\int_\Omega u_t(t)x\cdot\nabla u(t)\,dx,
\end{aligned}
$$ 
where $\nu$ denotes the outer unit normal on $\partial\Omega$.
Since $x\cdot\nu\geq 0$ on $\partial\Omega$ and 
$$\int_\Omega u_t(t)x\cdot\nabla u(t)\,dx\leq C\|\nabla u(t)\|_2\|u_t(t)\|_2\leq\eps\|u(t)\|_{1,2}^2+C_\eps\|u_t(t)\|_2^2,$$
the Pohozaev identity guarantees
\begin{equation} \label{Poh1}
\frac12\|u(t)\|_{1,2}^2+\|u(t)\|_2^2-\frac3{q+1}\|u(t)\|_{q+1}^{q+1}+\frac54\lambda I(t)
\le \eps\|u(t)\|_{1,2}^2+C_\eps\|u_t(t)\|_2^2.
\end{equation}
Next the first line in \eqref{multu} similarly implies
\begin{equation} \label{multu2}
-\|u(t)\|_{1,2}^2+\|u(t)\|_{q+1}^{q+1}-\lambda I(t)=\int_\Omega u(t)u_t(t)\,dx
\leq \eps\|u(t)\|_{1,2}^2+C_\eps\|u_t(t)\|_2^2,
\end{equation}
and \eqref{E-SPS} yields 
\begin{equation} \label{Eest}
\frac12\|u(t)\|_{1,2}^2-\frac1{q+1}\|u(t)\|_{q+1}^{q+1}+\frac\lambda4 I(t)=E(u(t)). 
\end{equation}
Choose $\eps=\frac1{12}(q-2)$, hence $C_\eps=C(q)$.
Multiplying inequalities \eqref{multu2} and \eqref{Eest} by 2 and q+1 respectively, and adding 
them to \eqref{Poh1} we obtain (cf.~\cite[(19)]{GW20})
\begin{equation} \label{estEut}
 \frac{q-2}4\|u(t)\|_{1,2}^2+\frac{q-2}{q+1}\|u(t)\|_{q+1}^{q+1}+ \frac{q-2}4\lambda I(t)
 \le  3C(q)\|u_t(t)\|_2^2+(q+1)E(u(t)).
\end{equation}

{\it Step 3.}\/
Assume that $q<3$ and
\begin{equation} \label{C0}
C_0:=\lim_{t\to T_{max}(u_0)}E(u(t))>-\infty
\end{equation}
and notice that \eqref{AEinfty} implies $T_{max}(u_0)=\infty$.
We will show that
\begin{equation} \label{limsupuX}
\limsup_{t\to\infty}{\|u(t)\|_X}<C=C(C_0).
\end{equation}
Choose $T_0>0$ such that $E(u(T_0))<C_0+1$.
Then
\begin{equation} \label{estut2}
\int_{T_0}^{\infty}\|u_t\|_2^2\,dt\le 1.
\end{equation} 
If 
\begin{equation} \label{assut1}
\hbox{$t_1>T_0$ \ and \ $\|u_t(t_1)\|_2\leq 1$}, 
\end{equation}
then the inequality $E(u(t_1))\leq C_0+1$ and estimate \eqref{estEut}
imply
$\|u(t_1)\|_{1,2}\leq C(C_0)$.
hence 
$\|u(t_1)\|_X\leq C(C_0)$.
Proposition~\ref{prop3} implies the existence of $\delta^*>0$ and 
$C_2=C_2(\|u(t_1)\|_X)=C_2(C_0)>0$ 
such 
\begin{equation} \label{estt1delta}
\hbox{$\|u(t)\|_X\leq C_2$ for $t\in[t_1,t_1+\delta^*]$.}
\end{equation}
  
Estimate \eqref{estut2} implies the existence of $T_1>T_0$ with the following property:
\begin{equation} \label{utsmall}
\hbox{For any $\tilde t\ge T_1$ there exists $t_1\in(\tilde t,\tilde t+\delta^*/2)$
such that $\|u_t(t_1)\|_2\le1$.}
\end{equation}
Consequently, \eqref{estt1delta} implies 
\begin{equation} \label{estT1}
\hbox{$\|u(t)\|_X\leq C_2$ for all $t>T_1+\delta^*$,}
\end{equation}
hence \eqref{limsupuX} is true.

{\it Step 4.}\/
Assume $q<3$.
We will prove that \eqref{E-pos} implies \eqref{AEpos}, hence assertion (i)
is true.
Assume on the contrary that there exist solutions $u_k$ with initial data $u_{0,k}\in X$ such that
$\|u_{0,k}\|_X\leq C$ and
$E(u_k(t))\geq0$ for $t\in(0,T_{max}(u_{0,k}))$, but
$\|u_k(t_k)\|_X\to\infty$ for some $t_k\in(0,T_{max}(u_{0,k}))$.
Notice that $T_{max}(u_{0,k})=\infty$ due to \eqref{AEinfty} 
and $t_k\to\infty$ due to \eqref{AEtau}.
Estimate \eqref{AEtau} also shows that $M_k:=\inf_{t\in[t_k-1,t_k]}\|u_k(t)\|_X\to\infty$
as $k\to\infty$. Since $E(u_k(t))\leq C$ on $[t_k-1,t_k]$ due to \eqref{T1},
estimate \eqref{estEut} implies $\inf_{t\in[t_k-1,t_k]}\|\frac{\partial u_k}{\partial t}(t)\|_2\to\infty$,
hence $E(u_k(t_k-1))\geq E(u_k(t_k-1))-E(u_k(t_k))\to\infty$, which yields a contradiction.
 
{\it Step 5.}\/
Assume that \eqref{E-pos} fails and $q<3$.
Then $E(u(t^*))<0$ for some $t^*>0$.
If $C_0:=\lim_{t\to T_{max}(u_0)}E(u(t))>-\infty$, then \eqref{AEinfty} and \eqref{limsupuX} 
imply
$$T_{max}(u_0)=\infty\ \hbox{ and }\ \limsup_{t\to\infty}\|u(t)\|_X<\infty.$$
Proposition~\ref{prop3} guarantees $\limsup_{t\to\infty}\|u(t)\|_{1,2}<\infty$
and the compactness of the embedding
$H^1_2(\Omega)\hookrightarrow X$ implies that the $\omega$-limit set $\omega(u_0)$ of the solution $u$ in $X$ is nonempty and
consists of steady states (due to the existence of the Lyapunov functional $E$).
Choose $u_S\in\omega(u_0)$. Then $E(u_S)\le E(u(t^*))<0$ and \eqref{estEut} 
with $u(t)$ and $u_t(t)$ replaced by $u_S$ and $0$, respectively,
yields a contradiction.
Consequently,
$\lim_{t\to T_{max}(u_0)}E(u(t))=-\infty$.
Since $|E(u(t))|\leq C(\|u(t)\|_{1,2})\leq C(\|u(t-\delta)\|_X)$, we also have 
$\lim_{t\to T_{max}(u_0)}\|u(t)\|_X=\infty$ which concludes the proof of
(ii) if $q<3$.

Notice also that \eqref{estEut} implies $\frac{d}{dt}(-E(u(t)))=\|u_t(t)\|_2^2>-cE(u(t))$. 

{\it Step 6.}\/
Let $q\ge3$.
If $E$ stays bounded below, then the results in \cite{I12}
imply both $T_{max}(u_0)=\infty$ and estimate \eqref{AE2},
hence it is sufficient to prove assertion (ii).   
Assume that \eqref{E-pos} fails. 
Then \eqref{E-neg} follows again from \cite{I12} 
and the smoothing estimates in Proposition~\ref{prop3}.
Hence it is sufficient to show \eqref{TfiniteAE}.

Fix $t_0\in(0,T_{max}(u_0))$
such that $E(u(t_0))<0$ and let $\delta\in(0,T_{max}(u_0)-t_0)$,
$t_\delta:=T_{max}(u_0)-\delta$ if $T_{max}(u_0)<\infty$, $t_\delta:=t_0$
otherwise.
Set
$$ y(t):=\|u(t)\|_2^2,\quad
  w(t):=\|u(t)\|_{q+1}^{q+1},\quad
  z(t):=-E(u(t)) $$
and consider $t\geq t_0$.
Notice that $w\geq cy^{(q+1)/2}$ and $z(t)\geq z(t_0)>0$. 
In addition, $z_t(t)=\int_\Omega u_t^2(t)\,dx$.

First assume $q>3$. 
Multiplying the equation in \eqref{pbmSPS} with $u$ and integrating over $\Omega$ we obtain
(cf.~\eqref{multu})
\begin{equation} \label{yzy}
\begin{aligned} 
\frac12y_t(t) &=\int_\Omega u(t)u_t(t)\,dx = -\|u(t)\|_{1,2}^2+w(t)-\lambda I(t) \\
&= 4z(t)+\|u(t)\|_{1,2}^2+\Bigl(1-\frac4{q+1}\Bigr)w(t) 
  \geq 4z(t_\delta)+ cy^{(q+1)/2}(t), \quad t\geq t_\delta, 
\end{aligned}
\end{equation}
which implies $T_{max}(u_0)<\infty$ and
$-E(u(t))\le -E(u(t_\delta))=z(t_\delta)<C(\delta)$ for $t\le t_\delta$.
This estimate and estimates in \cite{I12} imply \eqref{AE}.

Next assume $q=3$. If $\lambda$ is small enough, then one can use a slight modification
of the simple argument used in the case $q>3$ (see \cite{LM15}).
However, without such restriction on $\lambda$ we have to use more complicated arguments.
First notice that \eqref{Poh1},\eqref{multu2} and \eqref{Eest} imply
\begin{equation} \label{Poh1add}
\frac54\lambda I(t)-\frac34w(t)\leq Cz_t(t),
\end{equation}
\begin{equation} \label{multu2add}
-\|u(t)\|_{1,2}^2+w(t)-\lambda I(t)=\int_\Omega u(t)u_t(t)\,dx,
\end{equation}
and
\begin{equation} \label{Eestadd}
\frac12\|u(t)\|_{1,2}^2-\frac14w(t)+\frac14\lambda I(t)=-z(t).
\end{equation}
Formula \eqref{Eestadd} implies $w>\lambda I$, hence there exists $\eps(t)\in(0,1)$ such that
\begin{equation} \label{zw}
 \eps(t)w(t)=w(t)-\lambda I(t)=2\|u(t)\|_{1,2}^2+4z(t)>2\|u(t)\|_{1,2}^2.
\end{equation}
Now \eqref{zw} and \eqref{multu2add} yield
$$ \begin{aligned}
\frac{\eps(t)}2w(t) &\leq -\|u(t)\|_{1,2}^2+\eps(t)w(t)
=-\|u(t)\|_{1,2}^2+w(t)-\lambda I(t)
\\
&=\int_\Omega u(t)u_t(t)\,dx
  \leq \Bigl(\int_\Omega u^4(t)\,dx\Bigr)^{1/4}\Bigl(\int_\Omega |u_t|^{4/3}\,dx\Bigr)^{3/4}
 \leq Cw(t)^{1/4}z_t(t)^{1/2}. 
\end{aligned}$$
This estimate and \eqref{zw} guarantee
\begin{equation} \label{estzt1}
   z_t(t)\geq c\eps^2(t)w^{3/2}(t)\geq c\eps^{1/2}(t)z^{3/2}(t),
\end{equation}
hence 
\begin{equation} \label{estzt1a}
   z_t(t)\geq cz^{3/2}(t)\quad\hbox{provided }\ \eps(t)>\frac14.
\end{equation}
If $ \eps(t)\le\frac14$,
then $\lambda I(t)\geq\frac34 w(t)$, hence
\eqref{Poh1add} and \eqref{zw}
imply
\begin{equation} \label{estzt2}
 z_t(t)\geq cw(t)\geq c\frac1{\eps(t)}z(t) \quad \hbox{provided }\ \eps(t)\le\frac14.
\end{equation}
Now \eqref{estzt1} and \eqref{estzt2} yield
\begin{equation} \label{estzt2a}
 z_t(t)\geq c(\eps^{1/2}(t)z^{3/2}(t))^{2/3}\Bigl(\frac1{\eps(t)}z(t)\Bigr)^{1/3}=c z^{4/3}(t),\quad 
\hbox{provided }\ \eps(t)\le\frac14.
\end{equation}
Similarly as in the case $q>3$,
inequalities \eqref{estzt1a} and \eqref{estzt2a} imply 
$T_{max}(u_0)<\infty$ and $z(t_\delta)<C_\delta$,
and this estimate together with estimates in \cite{I12} imply \eqref{AE}.
\end{proof}

\begin{proof}[Proof of Corollary~\ref{cor23}]
Without loss of generality we may assume $0\in\Omega$.
Fix a positive integer $k$, $\delta\in(0,(q-2)/(2k))$, consider $M\gg1$ and set
$$M_i:=M^{1+(i-1)\delta},\quad  v_i(x):=M_i^2(1-M_i|x|)_+,\quad i=1,2,\dots,k.$$
By $c,C$ we denote positive constants which may vary from line to line,
but which are independent of $M$.
Using the first inequality in \eqref{IHLS} we obtain
$$\|v_i\|_{q+1}^{q+1}\ge cM_i^{2q-1}\quad\hbox{and}\quad
 \int_\Omega(|\nabla v_i|^2+v_i^2)\,dx
 +\lambda\int_\Omega\int_\Omega\frac{v_i^2(x)v_i^2(y)}{|x-y|}\,dy\,dx\le CM_i^3.$$
Let 
\begin{equation} \label{v}
v:=\sum_{i=1}^k\alpha_iv_i, \quad \hbox{where }\  \sum_{i=1}^k|\alpha_i|=1,
\end{equation}
and set $N_i:=\|\alpha_iv_i\|_{q+1}^{q+1}$.
Then $N_{i_1}\ge N_{i_2}\ge \dots\ge N_{i_k}$ for some permutation $\{i_1,i_2,\dots,i_k\}$
of indices $\{1,2,\dots,k\}$.
Since $M_i\ge M$ for all $i$ and there exists $j$ such that $|\alpha_j|\geq1/k$, we have $N_{i_1}\geq cM^{2q-1}$.
Set $A:=\{j:N_{i_j}\ge M^\delta N_{i_{j+1}}\}$ and
$m:=\min A$ if $A$ is nonempty, $m:=k$ otherwise.
Set also $i^*:=\min\{i_1,i_2,\dots,i_m\}$.
If $j\leq m$ and $i_j\ne i^*$, then $i_j>i^*$, hence $M_{i_j}\geq M_{i^*}M^\delta$
and $v_{i_j}(x)=0$ if $|x|>M_{i^*}^{-1}M^{-\delta}$.
Consequently,
$$ \Big\|\sum_{j=1}^m\alpha_{i_j}v_{i_j}\Big\|_{q+1}^{q+1}
  \geq\int_{M_{i^*}^{-1}M^{-\delta}<|x|<M_{i^*}^{-1}}|\alpha_{i^*}u_{i^*}|^{q+1}\,dx
 \ge cN_{i^*},$$
while the definition of $m$ implies
$$ \Big\|\sum_{j=m+1}^k\alpha_{i_j}v_{i_j}\Big\|_{q+1}^{q+1}\le CN_{i^*}M^{-\delta}, $$
hence the inequality $$\|w+z\|_{q+1}^{q+1}\ge c\|w\|_{q+1}^{q+1}-C\|z\|_{q+1}^{q+1}$$
guarantees
$$ \|v\|_{q+1}^{q+1}=\Big\|\sum_{j=1}^k\alpha_{i_j}v_{i_j}\Big\|_{q+1}^{q+1}
  \ge cN_{i^*} \ge cN_{i_1}M^{-(m-1)\delta}\ge cM^{2q-1-k\delta}.$$
Since
$$ \int_\Omega(|\nabla v|^2+v^2)\,dx+\lambda\int_\Omega\int_\Omega\frac{v^2(x)v^2(y)}{|x-y|}\,dy\,dx\le CM_k^3\le CM^{3(1+k\delta)}, $$
and $2q-1-k\delta>3(1+k\delta)$, we have  
$E(v)<0$ provided $M$ is large enough.
Fix such $M$.

Set $X_k:=\hbox{span}\{v_1,v_2,\dots,v_k\}$,
$$O_k:=\{u_0\in X_k: \hbox{the solution $u$ of \eqref{pbmSPS} exists globally and tends to 0 as $t\to\infty$}\},$$
and let $D_k$ be the component of $O_k$ containing $u_0=0$.
Since $u=0$ is an asymptotically stable steady state of \eqref{pbmSPS}, the sets $O_k,D_k$ are open. 
Since $E(0)=0$ and $E(v)<0$ for any $v$ satisfying \eqref{v}, we have
$v\notin O_k$, hence $D_k$ is bounded. 
Now the same arguments as in \cite{Q04} show that we may find $u_0,\tilde u_0\in\partial D_2$ such that
the corresponding solutions $u$ and $\tilde u$ are global, bounded and their $\omega$-limit sets
contain a positive and a sign-changing steady state,\ respectively.
Similarly, if $\Omega$ is a ball with radius $R$ and center at zero, then each $v\in X_k$ is a radial function
which changes sign at most $k-1$ times (since $|x|\mapsto v(|x|)$ is affine on intervals
of the form $[0,1/M_k]$ or $[1/M_{i+1},1/M_i]$, and $v=0$ on $[1/M_1,R]$). Consequently, the arguments in \cite{Q04} (cf.~also \cite{I13})
guarantee that for any $i=0,\dots,k-1$ we can find $u_0\in\partial D_k$ such that
the $\omega$-limit set of the corresponding solution contains a steady state with precisely $i$ sign changes.
\end{proof}

\subsection{Proofs of Theorems~\ref{thmChoquard} and \ref{thmFracLap}}

Assume \eqref{assOmega}. Let
$\alpha\in(0,1]$, ${\mathcal A}=(-\Delta)^\alpha$, 
$1<p<p_S(\alpha)$ and $u_0\in L_q(\Omega)$, where $q>\max(1,\frac{n}{2\alpha}(p-1))$.

If $\alpha<1$, then set $c_p:=\frac1{p+1}$, ${\mathcal F}(u):=|u|^{p-1}u$  
and notice that ${\mathcal F}=\nabla\Phi$, where
$$ \Phi(u) = c_p\int_\Omega|u|^{p+1}\,dx. $$
Next assume $\alpha=1$, $p\geq2$, $3\le n\le5$, set $c_p:=\frac1{2p}$, 
\begin{equation} \label{vu}
{\mathcal F}(u):=v_u|u|^{p-2}u,\quad\hbox{where}\quad
v_u(x):=\int_\Omega\frac{|u(y)|^p}{|x-y|^{n-2}}\,dy,
\end{equation}
and notice that ${\mathcal F}=\nabla\Phi$, where
$$ \Phi(u) = c_p\int_\Omega\int_\Omega\frac{|u(x)|^p|u(y)|^p}{|x-y|^{n-2}}\,dy\,dx. $$ 
Denoting by $\langle\cdot,\cdot\rangle_\Omega$ the duality in $L_2(\Omega)$
and by $\|\cdot\|_{2,\R^n}$ the norm in $L^2(\R^n)$, we obtain
$$\begin{aligned}
\|u\|_{p+1}^{p+1}&=\langle u,|u|^{p-1}u\rangle_\Omega=\langle u,|u|^{p-1}u\rangle_{\R^n} \\
&=\langle (-\Delta)^{1/2}u,(-\Delta)^{-1/2}|u|^{p-1}u\rangle\rangle_{\R^n}
\leq\|(-\Delta)^{1/2}u\|_{2,\R^n}\|(-\Delta)^{-1/2}|u|^{p-1}u\|_{2,\R^n} \\
&\leq \|\nabla u\|_{2,\R^n}^2+\|(-\Delta)^{-1/2}|u|^{p-1}u\|_{2,\R^n}^2
\leq C(\int_\Omega{\mathcal A}u\cdot u\,dx+\Phi(u)),
\end{aligned}$$  
where we used (see \cite[5.10(2)]{LL01}) 
$$ \|(-\Delta)^{-1/2}f\|_{2,\R^n}^2=c(n)\int_{\R^n}\int_{\R^n}\frac{f(x)f(y)}{|x-y|^{n-2}}\,dy\,dx. $$
Consequently, both for $\alpha<1$ and $\alpha=1$ we have
\begin{equation} \label{fp1}
\int_\Omega{\mathcal A}u\cdot u\,dx+\Phi(u) \geq c\|u\|_{p+1}^{p+1}\geq \tilde c\|u\|_2^{p+1}.
\end{equation}
Set also
$$E(u):=\frac12\int_\Omega{\mathcal A}u\cdot u\,dx 
-\Phi(u)$$
and notice that $\Phi(u)=c_p\int_\Omega{\mathcal F}(u)u\,dx$.
Choose $c_1\in(2,p+1)$.
Multiplying the equation in \eqref{pbmChoquard} or \eqref{fracLap} with $u$,
integrating over $\Omega$ and using \eqref{fp1} we obtain
\begin{equation} \label{uut}
\begin{aligned}
\frac12\frac{d}{dt}&\|u(t)\|_2^2 
=\int_\Omega u_tu\,dx=\int_\Omega(-{\mathcal A}(u)+{\mathcal F}(u))u\,dx \\
&\geq-c_1E(u)+c_2(\|u\|_{\alpha,2}^2+\Phi(u))
\geq-c_1E(u(t))+c_3\|u(t)\|_2^{p+1}, 
\end{aligned}
\end{equation}
where $c_2,c_3>0$.
Fix $\delta>0$ small.
Since $E(u(\delta))\leq \|u(\delta)\|_{\alpha,2}^2\leq C(\|u_0\|_q)$,
estimate \eqref{uut} implies 
\begin{equation} \label{boundE}
C\geq E(u(\delta)\geq E(u(t))\geq-C\quad\hbox{and}\quad
 \|u(t)\|_2\leq C\quad\hbox{for }\ t\in(\delta,T_\delta),
\end{equation}
where $T_\delta:=T-\delta$ if $T<\infty$, $T_\delta=\infty$ if $T=\infty$,
and $C=C(\delta,c_1,c_3,p)$
(otherwise the function $y(t):=\|u(t)\|_2^2$ would blow up at some $T_y<T$). 
In addition,
\begin{equation} \label{ut2}
\int_\delta^{T_\delta}\int_\Omega u_t^2(x,t)\,dx\,dt
\leq E(u(\delta))-E(u(T_\delta))\leq C,
\end{equation}
and the boundedness of energy and \eqref{fp1} also imply
\begin{equation} \label{phip1}
C+\Phi(u) \geq c\|u\|_{p+1}^{p+1}.
\end{equation}
If
$\delta\le t_1<t_2<T_\delta$, \ $t_2-t_1\le 2$ and $Q\geq2$, then \eqref{phip1}, \eqref{boundE} and \eqref{uut} imply   
\begin{equation} \label{estLong}
 \int_{t_1}^{t_2}\|u(t)\|_{p+1}^{Q(p+1)}\,dt
 \leq C\Bigl(1+\int_{t_1}^{t_2}\Phi(u(t))^Q\,dt\Bigr)
\leq C\int_{t_1}^{t_2}\Bigl(1+\Big|\int_\Omega u_tu\,dx\Big|^Q\Bigr)\,dt,
\end{equation}
and if $Q=2$, then using the Cauchy inequality,
 the boundedness of $\|u(t)\|_2$ and \eqref{ut2} we also obtain
$$ \int_{t_1}^{t_2}\Bigl(1+\Big|\int_\Omega u_tu\,dx\Big|^2\Bigr)\,dt \leq C, $$
hence 
\begin{equation} \label{boundQ}
 \int_{t_1}^{t_2}\Phi(u(t))^Q\,dt \leq C
\end{equation}
provided $Q=2$.
The estimates above show that if $Q=2$, 
then $u$ is bounded in $W^1_2((t_1,t_2),L_2(\Omega))\cap L_{Q(p+1)}((t_1,t_2),L_{p+1}(\Omega))$ 
and interpolation (see \cite[Proposition~2.1]{Q03} and \cite[Theorems~VII.2.7.2 and VII.2.8.3]{A19}) yields
\begin{equation} \label{boundLs}
 \|u(t)\|_s\leq C \quad\hbox{if}\quad s< s_Q:=p+1-\frac{p-1}{Q+1}.
\end{equation}
If $s_Q>s^*=s^*(\alpha):=\frac{n}{2\alpha}(p-1)$, then estimate
\eqref{boundLs} 
and Propositions~\ref{prop2} and~\ref{prop4}
imply the claimed estimate \eqref{AE} with $X:=L_s$ and $s\in(s^*,s_Q)$.
Since estimate \eqref{boundLs} is guaranteed by \eqref{boundQ} and
$\lim_{Q\to\infty}s_Q=p+1>s^*$, it is sufficient to prove \eqref{boundQ} with $Q$ large enough
($Q>Q_p$, where $Q_p$ is such that $s_{Q_p}=s^*$, i.e.\ $\frac1{Q_p+1}=\frac{p+1}{p-1}-\frac{n}{2\alpha}$).

We will use a bootstrap argument. We will assume that \eqref{boundQ} (hence
also \eqref{boundLs}) is true for some $Q\in[2,Q_p]$ and we will prove that there
exists $\tilde Q>Q$ such that \eqref{boundQ} is true with $Q$ replaced by $\tilde Q$.
In addition, the difference $\tilde Q-Q$ will be bounded below by a positive constant. 

Hence assume \eqref{boundQ} and \eqref{boundLs} for some $Q\in[2,Q_p]$ and consider $\tilde Q>Q$.
Consider $s\in(2,s_Q)$ close to $s_Q$, set $s'=\frac s{s-1}$ and 
$$r=\begin{cases} r_1:=\frac{p+1}p &\hbox{ if }\alpha<1,\\
                  r_2:=\frac{2np}{2np-n-2} &\hbox{ if }\alpha=1.
\end{cases}$$
Recall that we assume $n\ge3$ and $2\le p<p^*<p_S$ if $\alpha=1$,
hence $r_2$ is well defined and $\frac{p}{p-1}>r_2>r_1>1$. 
In addition,  if $s<s_Q\le s^*(\alpha)$, then $p<p_S(\alpha)$ implies $r_i<s'$ for $i=1,2$.
Let $\theta\in(0,1)$ be defined by $\frac\theta{r}+\frac{1-\theta}2=\frac1{s'}$
and $\beta:=\frac2{(1-\theta)\strut\tilde Q}$.
Notice that if $\tilde Q$ is sufficiently close to $Q$ and $s$ is sufficiently close to $s_Q$,
then $\beta>1$.

Let $D_r$ be the domain of the Dirichlet fractional Laplacian $(-\Delta)^\alpha_D$
considered as an unbounded operator in $L_r(\Omega)$.
Given an interval of the form $[t_2-\tau,t_2]\subset[\delta,T_\delta)$ with $\tau\in[2\delta,2]$,
Propositions~\ref{prop2} and~\ref{prop4} guarantee that we can
choose $t_1\in[t_2-\tau,t_2-\tau+\delta]$ such that $\|u(t_1)\|_Y\le C$, where $Y$ is the real interpolation space
$(L_r(\Omega),D_r)_{1-1/\tilde p,\tilde p}$
with $\tilde p:=\beta'\theta\tilde Q$
(cf.\ the space $X_{p,q}$ in the proof of Proposition~\ref{prop4}).

Now using gradually the second inequality in \eqref{estLong},
H\"older's inequality and \eqref{boundLs}, interpolation,
H\"olders's inequality and \eqref{ut2}, and the maximal regularity estimate
\eqref{MRfrac}
we obtain
$$ \begin{aligned} 
 \int_{t_1}^{t_2}\Phi(u(t))^{\tilde Q}\,dt
&\leq C\Bigl(1+\int_{t_1}^{t_2}\Big|\int_\Omega u_tu\,dt\Big|^{\tilde Q}\Bigr) \\
&\leq C\Bigl(1+\int_{t_1}^{t_2}\|u_t\|_{s'}^{\tilde Q}\,dt\Bigr) \\
&\leq C\Bigl(1+\int_{t_1}^{t_2}\|u_t\|_{r}^{\theta\tilde Q}\|u_t\|_2^{(1-\theta)\tilde Q}\,dt\Bigr) \\
&\leq C\Bigl(1+\Bigl(\int_{t_1}^{t_2}\|u_t\|_{r}^{\beta'\theta\tilde Q}\,dt\Bigr)^{1/\beta'}\Bigr) \\
&\leq C\Bigl(1+\Bigl(\int_{t_1}^{t_2}\|{\mathcal F}(u)\|_{r}^{\beta'\theta\tilde Q}\,dt\,\Bigr)^{1/\beta'}\Bigr). \\
\end{aligned} $$
Hence estimate \eqref{boundQ} with $Q$ replaced by $\tilde Q$ is true provided
we can verify the  bootstrap condition
\begin{equation} \label{BC}
\|{\mathcal F}(u)\|_{r}^{\beta'\theta\tilde Q}\leq C(1+\Phi(u)^{\tilde Q}).
\end{equation}
If $\alpha<1$, then  $r=\frac{p+1}p$ and ${\mathcal F}(u)=|u|^{p-1}u$,
hence $\|{\mathcal F}(u)\|_{r}=C\Phi(u)^{\frac p{p+1}}$
and the condition \eqref{BC} is true for any 
$\tilde Q<Q+2$ (cf.\ \cite[the proof of Theorem 22.1]{QS19}).

Next assume $\alpha=1$, hence $r=\frac{2np}{2np-n-2}$ and
${\mathcal F}(u)=\bigl(\int_\Omega\frac{|u(y)|^p}{|x-y|^{n-2}}\,dy\bigr)|u|^{p-2}u$.
First we will prove that
\begin{equation} \label{FPhi}
\|{\mathcal F}(u)\|_{r}\le \Phi(u)^{\frac{2p-1}{2p}}.
\end{equation}
In fact, using the notation in \eqref{vu} we obtain
$$ \begin{aligned}
\|{\mathcal F}(u)\|_{r}&=\Bigl(\int_\Omega (v_u|u|^{p-1})^r\,dx\Bigr)^{1/r}
=\Bigl(\int_\Omega v_u^{r(p-1)/p}|u|^{(p-1)r}v_u^{r/p}\,dx\Bigr)^{1/r} \\
&\leq \Bigl(\int_\Omega v_u|u|^p\,dx\Bigr)^{(p-1)/p}
    \Bigl(\int_\Omega v_u^{r/(p-r(p-1))}\,dx\Bigr)^{1/r-(p-1)/p} \\
 &= c\Phi(u)^{(p-1)/p}\|v_u\|_{2^*}^{1/p},
\end{aligned}
$$
where $2^*=\frac{r}{p-r(p-1)}=\frac{2n}{n-2}$.  
Since
$$ \|v_u\|_{2^*}^2\leq\int_{\R^n}|\nabla v_u|^2\,dx
  =\int_{\R^n} v_u(-\Delta v_u)\,dx
  =\int_{\R^n} v_u|u|^p\,dx = \Phi(u),$$
we obtain \eqref{FPhi}. 
Now \eqref{FPhi} guarantees that the bootstrap condition \eqref{BC}
is satisfied if $\frac{2p-1}{2p}\beta'\theta\leq 1$.
Hence it is sufficient to show 
\begin{equation} \label{bc}
(1-\xi)\beta'\theta<1,
\end{equation}
where $\xi:=\frac1{2p}$.
Since this is an open condition, it is sufficient to show
that \eqref{bc} is true if $\beta=\frac2{(1-\theta)Q}$ 
and $\frac\theta r+\frac{1-\theta}2=\frac1{s'}$,
where $r=r_2$ and $s=s_Q$.  In this case, \eqref{bc} is equivalent to
\begin{equation} \label{bcxi}
 2\xi>(Q-2)\Bigl(\Bigl(\frac2r-1\Bigr)\frac{s}{s-2}-1\Bigr),
\end{equation}
where $\xi=\frac1{2p}$, $r=r_2$ and $s=s_Q$.
This condition is equivalent to
$$ (p-1)B>-4(Q+1)(Q-2),\quad\hbox{where}\quad B:=nQ-(Q-2)(2n+Q(n-2)).$$
If $Q=2$ or $B\geq0$, then this condition is satisfied,
hence we may assume $Q>2$ and $B<0$. Then we obtain the following condition
$$p<\frac{Q^2(n+2)-nQ-4(n+2)}{Q^2(n-2)-(n-4)Q-4n}.$$
The minimum of the RHS in the last inequality is attained at $Q=n+2+\sqrt{n^2+3n}$
and equals $p^*$. Consequently, if $p<p^*$, then \eqref{AE} is true. 
The remaining assertions follow in the same way as in \cite{Q03} and \cite{Q04}.
\qed

\begin{remark} \rm
Consider $r=r_3$, where $\frac1{r_3}=\frac{p-1}s+\frac1{2^*}$ and $s<s_Q$.
Then one can use H\"older's inequality to estimate
$$
\|{\mathcal F}(u)\|_r=\Bigl(\int_\Omega (v_u|u|^{p-1})^r\,dx\Bigr)^{1/r}
\leq \|u\|_s^{p-1}\|v_u\|_{2^*} \leq C\Phi(u)^{1-\xi},
$$
where $\xi=\frac12$. If we used this estimate instead of \eqref{FPhi},
then the bootstrap condition \eqref{bc} (hence \eqref{bcxi} with
$\xi=\frac12$, $r=r_3$ and $s=s_Q$)
would yield the same restriction $p<p^*$.
\end{remark}

\vskip2mm
\noindent{\bf Acknowledgements}

\vskip1mm
\noindent The author was supported in part by the Slovak Research and Development Agency
under the contract No. APVV-23-0039 and by VEGA grant 1/0245/24.
He thanks Helmut Abels for helpful comments on \cite{AG24}.

\vskip2mm
\noindent{\bf Data availability} \
Data sharing is not applicable to this article as no datasets were
generated or analyzed during the current study.
%The author declares that the data supporting the findings of this study 
%are available within the paper and its references.

\vskip2mm
\noindent{\bf Declarations}

\vskip1mm
\noindent{\bf Conﬂict of interest} \ The author has no conflict of interest to declare. 
%that are relevant to the content of this article.

\end{document}